\numberwithin{equation}{section}
\newtheorem{Theorem}{Theorem}[section]
\newtheorem{Corollary}[Theorem]{Corollary}
\newtheorem{Lemma}[Theorem]{Lemma}
 { \theoremstyle{definition}
\newtheorem{Remark}[Theorem]{Remark} }
\def\Xint#1{\mathchoice
{\XXint\displaystyle\textstyle{#1}}%
{\XXint\textstyle\scriptstyle{#1}}%
{\XXint\scriptstyle\scriptscriptstyle{#1}}%
{\XXint\scriptscriptstyle\scriptscriptstyle{#1}}%
\!\int}
\def\XXint#1#2#3{{\setbox0=\hbox{$#1{#2#3}{\int}$}
\vcenter{\hbox{$#2#3$}}\kern-.5\wd0}}
\def\dashint{\Xint-}
\newcommand{\intZ}{\mathbb{Z}}
\newcommand{\prodprime}{\sideset{}{'}\prod}
\newcommand{\lHopital}{l'H\^{o}pital}
\newcommand{\Weylchamber}{\mathbb{W}}
\newcommand{\bigO}{\mathcal{O}}
\DeclareMathOperator*{\Res}{Res}
\DeclareMathOperator{\id}{id}
\begin{document}


\renewcommand{\thefootnote}{$\star$}

\newcommand{\arXivNumber}{1512.01612}

\renewcommand{\PaperNumber}{037}

\FirstPageHeading

\ShortArticleName{The Transition Probability of the $q$-TAZRP ($q$-Bosons) with Inhomogeneous Jump Rates}

\ArticleName{The Transition Probability of the $\boldsymbol{q}$-TAZRP\\ ($\boldsymbol{q}$-Bosons) with Inhomogeneous Jump Rates\footnote{This paper is a~contribution to the Special Issue
on Asymptotics and Universality in Random Matrices, Random Growth Processes, Integrable Systems and Statistical Physics in honor of Percy Deift and Craig Tracy.
The full collection is available at \href{http://www.emis.de/journals/SIGMA/Deift-Tracy.html}{http://www.emis.de/journals/SIGMA/Deift-Tracy.html}}}

\Author{Dong WANG~$^\dag$ and David WAUGH~$^\ddag$}

\AuthorNameForHeading{D.~Wang and D.~Waugh}

\Address{$^\dag$~Department of Mathematics, National University of Singapore, 119076, Singapore}
\EmailD{\href{mailto:matwd@nus.edu.sg}{matwd@nus.edu.sg}}
\URLaddressD{\url{http://www.math.nus.edu.sg/~matwd/}}

\Address{$^\ddag$~BNP Paribas, 787 7th Avenue, New York, NY, 10019, USA}
\EmailD{\href{mailto:david.waugh@u.nus.edu}{david.waugh@u.nus.edu}}

\ArticleDates{Received January 03, 2016, in f\/inal form April 08, 2016; Published online April 14, 2016}

\Abstract{In this paper we consider the $q$-deformed totally asymmetric zero range process ($q$-TAZRP), also known as the $q$-boson (stochastic) particle system, on the $\intZ$ lattice, such that the jump rate of a particle depends on the site where it is on the lattice. We derive the transition probability for an $n$ particle process in Bethe ansatz form as a sum of $n!$ $n$-fold contour integrals. Our result generalizes the transition probability formula by Korhonen and Lee for $q$-TAZRP with a homogeneous lattice, and our method follows the same approach as theirs.}

\Keywords{zero range process; transition probability; interacting particle systems; Bethe ansatz}

\Classification{82B23; 82C22; 60J35}

\renewcommand{\thefootnote}{\arabic{footnote}}
\setcounter{footnote}{0}

\section{Introduction}

The zero range process (ZRP) is a well studied model in non-equilibrium stochastic mechanics. Since it was introduced by Spitzer \cite{Spitzer70}, it has become a popular model in both the physics and mathematics communities \cite{Balazs-Komjathy08}.

With properly chosen parameters, as in the $q$-TAZRP to be described below, the $1$-di\-men\-sio\-nal ZRP model is integrable, and is solvable by the Bethe ansatz \cite{Borodin-Corwin-Sasamoto14,Povolotsky13, Sasamoto-Wadati98}. The application of the Bethe ansatz to f\/ind the explicit transition pro\-ba\-bilities in $1$-dimensional particle models has come along two decades since \cite{Gwa-Spohn92} and \cite{Schutz97}, and it has yielded the explicit transition probability formulas for the asymmetric simple exclusion process (ASEP) in Tracy and Widom's seminal work~\cite{Tracy-Widom08}. These exact formulas enabled (after further manipulation) Tracy and Widom to calculate the asymptotic distribution of one particle as the number of particles approaches inf\/inity with special initial conditions \cite{Tracy-Widom09}, and greatly improve our understanding of the KPZ universality~\cite{Corwin11,Quastel12}. Since then, dif\/ferent methods have been developed and exact formulas for the transition probabilities have been studied for various integrable $1$-dimensional particle models~\cite{Lee11}, including the $q$-deformed totally asymmetric simple exclusion process ($q$-TASEP), which is an example of the general framework of Macdonald processes~\cite{Borodin-Corwin13} and has duality with $q$-TAZRP~\cite{Borodin-Corwin-Sasamoto14}.

In our paper, we consider the $q$-deformed totally asymmetric zero range process ($q$-TAZRP), of which a def\/inition is given below. This model was considered by Borodin and Corwin as a~specialization of Macdonald processes, and then by Borodin, Corwin and Sasamoto in \cite{Borodin-Corwin-Sasamoto14} in the light of its stochastic duality (in the sense of \cite[Section~2.3]{Liggett05}) to the $q$-TASEP model. Our main inspiration is the paper~\cite{Korhonen-Lee14} by Korhonen and Lee, where the transition probability for the $q$-TAZRP with homogeneous jump rates is derived. In~\cite{Korhonen-Lee14} the authors used Tracy and Widom's method rather than results from Macdonald processes. Our paper is a generalization to~\cite{Korhonen-Lee14}, and we extend their method to inhomogeneous jump rates. Let us note that, this integrable ZRP was f\/irst introduced by Sasamoto and Wadati~\cite{Sasamoto-Wadati98} under the name of $q$-Boson totally asymmetric dif\/fusion model, and this name is used in later publications like~\cite{Borodin-Corwin-Petrov-Sasamoto15}, so we refer to this name in the title. But later in this paper we follow the nomenclature in~\cite{Korhonen-Lee14} and call the model $q$-TAZRP.

As mentioned above, $q$-TAZRP and $q$-TASEP have the stochastic duality. There is a natural particle-spacing duality between zero range process and simple exclusion process (SEP) models generally \cite{Spitzer70}, such that if particles $\{ x_i(t) \}$ are in a simple exclusion process, then their spacings $\{ y_i = x_i - x_{i + 1} \}$ are in a corresponding zero range process. While in~\cite{Borodin-Corwin-Sasamoto14}, a dif\/ferent stochastic duality that is associated to a duality function is observed, and it is applied there to derive moment formulas of $q$-TASEP. Therefore it is natural that the exact formulas enjoyed by $q$-TASEP have a dual version for $q$-TAZRP, and vice versa, although the relation between the formulas may not be direct. However, there is a certain discrepancy between the results for $q$-TASEP and those for $q$-TAZRP. The $q$-TASEP as a specialization of Macdonald process naturally contains parameters, which are independent of $q$, for the jumping rates of particles, so the particles may not be identical, but some are fast particles and some are slow. In the study of $q$-TAZRP, Korhonen and Lee assume that the jump rate of a particle only depends on the height of the stack where the particle belongs, and the model has no parameter other than $q$.

The particle-spacing duality between ZRP and SEP models suggests if a SEP model can have fast and slow particles and still be integrable, then the dual ZRP model can both be integrable and have inhomogeneous jump rates, that is, some lattice sites are easy to pass through and some are dif\/f\/icult. The purpose of our paper is to generalize the result in \cite{Korhonen-Lee14} to the $q$-TAZRP model with inhomogeneous jump rates. In this sense, we remove the discrepancy between the existing results for $q$-TASEP and $q$-TAZRP.

Before going into detail, we remark that the $q$-TAZRP with homogeneous jump rates can be analysed by spectral theory \cite{Borodin-Corwin-Petrov-Sasamoto15}, and proving the transition probability formula is equivalent to proving the direct Plancherel theory, see \cite[Section 7.5]{Borodin-Corwin-Petrov-Sasamoto15a} for an explanation in the ASEP case.
It would be very interesting to see if our transition probability results likewise lead to a direct Plancherel theory for the inhomogeneous rate model~-- though such a theory has not yet been developed.

It is worth noting that very recently Borodin and Petrov investigated the fully inhomogeneous stochastic higher spin six vertex model in a quadrant \cite{Borodin-Petrov16}, and the spectral theory was analysed. Our $q$-TAZRP model, together with the above mentioned $q$-TASEP and ASEP models, can all be regarded as degenerations of the general higher spin six vertex model.

Below we give a description of the $q$-TAZRP model. We keep the discussion on the background to a minimum, since \cite{Korhonen-Lee14} has a detailed introduction to this ef\/fect.

\subsection[Description of $q$-TAZRP]{Description of $\boldsymbol{q}$-TAZRP}

Throughout this paper, $q \in (0, 1)$ is a constant. Let $n$ particles be on the integer lattice $\intZ$, and denote the position of the $k$-th particle by $x_k$, or $x_k(t)$ if we specify the time $t \geq 0$. We also use $x_k$ to denote the $k$-th particle itself by abuse of notation. We assume that initially $(x_1(0), x_2(0), \dotsc, x_n(0)) =: X(0) \in \Weylchamber^n$, where $\Weylchamber^n = \{ (i_1, \dotsc, i_n) \in \intZ^n \,|\, i_1 \geq i_2 \geq \dotsb \geq i_n \}$. From the dynamics given below, we know that for all $t > 0$ the order $x_1(t) \geq x_2(t) \geq \dotsb \geq x_n(t)$ is kept, or equivalently, $X(t) \in \Weylchamber^n$.

Each particle is at an integer site at any given time, and several consecutive particles can share a site. Suppose $x_{k - 1}(t) > x_k(t) = x_{k + 1}(t) = \dotsb = x_{k + j - 1}(t) = m > x_{k + j}(t)$, then we say that at time $t$, the particles $x_k, x_{k + 1}, \dotsc, x_{k + j - 1}$ are in a stack at $m$ with height $j$, such that $x_k$ is at the top of the stack, $x_{k + i}$ is below $x_{k + i - 1}$ ($i = 1, \dotsc, j - 1$), and $x_{k + j - 1}$ is at the bottom. See Fig.~\ref{fig:particles}.
\begin{figure}[htb]
 \begin{minipage}[t]{0.46\linewidth}
 \centering
 \includegraphics{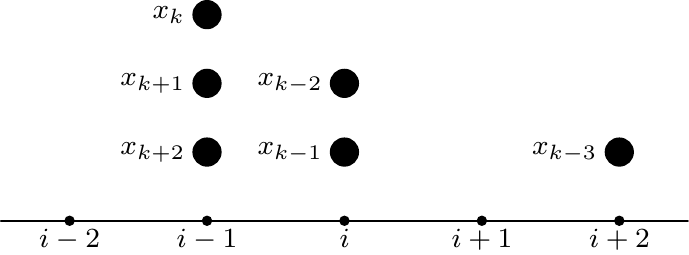}
 \caption{Particles in $q$-TAZRP. The $k$-th particle is on the top of a stack at $i - 1$ with height~$3$.}
 \label{fig:particles}
 \end{minipage}
 \hfill
 \begin{minipage}[t]{0.48\linewidth}
 \centering
 \includegraphics{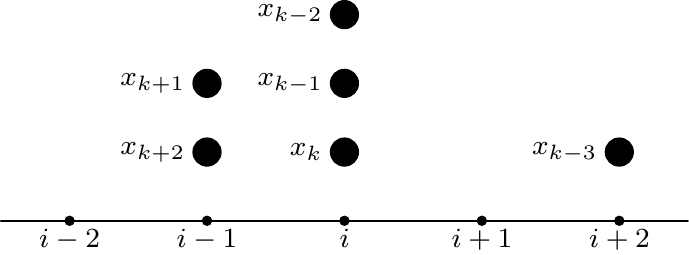}
 \caption{The $k$-th particle jumps to the bottom of the stack at $i$. Until the $k$-th particle jumps, the $(k + 1)$-th and $(k + 2)$-th particles stay put.}
 \label{fig:particles_jumped}
 \end{minipage}
\end{figure}

On each site $i$, we assign a \emph{conductance} $a_i > 0$. Throughout this paper, we denote $b_i = a_i(1 - q)$ for notational simplicity. The dynamics of the particles is given as follows.
If a particle is in a stack but is not on the top of the stack, then it cannot move; otherwise if the particle is on the top of a stack at $m$ with height $j$, it moves to site $m + 1$ at rate $a_m(1 - q^j) = b_m(1 + q + \dotsb + q^{j - 1})$.
The movement occurs regardless of whether the right neighbour site is occupied or not (hence the model is named ``zero range''). If the right neighbour site is occupied, then $x_k$ moves to the bottom of the stack there, and the height of that stack increases by $1$. See Fig.~\ref{fig:particles_jumped} for illustration.

The $q$-TAZRP is clearly a Markov process. Let $X = (x_1, x_2, \dotsc, x_n) \in \Weylchamber^n$ denote a state of the model, then $P(X; t)$, the probability that the model is at state $X$ at time $t$, satisf\/ies the master equation
\begin{gather} \label{eq:abstract_master_eq}
 \frac{d}{dt} P(X; t) = H P(X; t) = \sum_{Y \in \Weylchamber^n} P(Y; t) (c(Y, X) - c(X, Y)),
\end{gather}
where $H$ is the generator of the Markov process, and $c(X, Y)$ is the transition rate for the Markov process from state $X$ to state $Y$. To express the master equation more concretely, for $X = (x_1, \dotsc, x_n) \in \Weylchamber^n$ we denote
\begin{gather*}
 X^{k, -} = (x_1, \dotsc, x_{k - 1}, x_k - 1, x_{k + 1}, \dotsc, x_n), \qquad k = 1, \dotsc, n,
\end{gather*}
and denote
\begin{gather} \label{eq:various_notations}
 \begin{split}
 & \ell(X) :=  \text{$\#$ of the distinct values attained by $x_1, \dotsc, x_n$}, \\
 & v_i(X) :=  \text{the $i$-th largest value attained by $x_1, \dotsc, x_n$}, \\
& p_k(X) :=  \text{$\#$ of $x_1, \dotsc, x_n$ whose value is $k$}, \\
 & n_i(X) :=  p_{v_i}(X), \quad \text{and} \quad N_i(X) := n_1(X) + \dotsb + n_i(X).
 \end{split}
\end{gather}
For instance, if
\begin{gather} \label{eq:general_X}
 X = (\underbrace{s_1, \dotsc, s_1}_{k_1}, \underbrace{s_2, \dotsc, s_2}_{k_2}, \dotsc, \underbrace{s_m, \dotsc, s_m}_{k_m}), \qquad s_1 > s_2 > \dotsb > s_m,
\end{gather}
and let $K_i = k_1 + k_2 + \dotsb + k_i$, then $\ell(X) = m$, $v_i(X) = s_i$, $n_i(X) = k_i$, and $N_i(X) = K_i$. For~$X$ expressed in~\eqref{eq:general_X}, the master equation~\eqref{eq:abstract_master_eq} can be expressed concretely as
\begin{gather} \label{eq:master_eq_concrete}
 \frac{d}{dt} P(X; t) = \sum^m_{i = 1} \big(1 - q^{p_{s_i - 1}(X^{K_i, -})}\big) a_{s_i - 1} P\big(X^{K_i, -}; t\big) - \sum^m_{i = 1} \big(1 - q^{k_i}\big) a_{s_i} P(X; t),
\end{gather}
where $p_{s_i - 1}(X^{K_i, -})$ is the number of particles on site $s_i - 1$ for the state $X^{K_i, -}$ (as def\/ined in~\eqref{eq:various_notations}). Here we remark that if $X \in \Weylchamber^n$ and $k \in \{ 1, \dotsc, n \}$, $X^{k, -}$ may not be in $\Weylchamber^n$. But all~$X^{K_i, -}$ on the right-hand side of~\eqref{eq:master_eq_concrete} are in~$\Weylchamber^n$.

\subsection{Main result}

In this paper we compute $P_Y(X; t)$, the probability that the particles are in state $X \!= \! (x_1, \dotsc, x_n)$ at time $t > 0$, given the initial condition that the particles are in state $Y = (y_1, \dotsc, y_n)$ at time~$0$. Equivalently, $P_Y(X; t)$ is the solution to the master equation \eqref{eq:abstract_master_eq} with initial condition $P_Y(X; 0) = 1$ if $X = Y$ and $P_Y(X; t) = 0$ if $X \in \Weylchamber^n {\setminus} \{ Y \}$.

To state the main theorem, we introduce some notations. First, for~$X \in \Weylchamber^n$ in the form of~\eqref{eq:general_X}, denote
\begin{gather*}
 W(X) = \prod^{\ell(X)}_{i = 1} \prod^{n_i(X)}_{j = 1} \frac{1 - q^j}{1 - q} = \prod^m_{i = 1} [k_i]_q!,
\end{gather*}
where we use the $q$-deformed factorial $[k]_q! = (1 - q)(1 - q^2) \dotsb (1 - q^k)/(1 - q)^k$. Next, let $w_1, \dotsc, w_n$ be variables. For each pair of integers $\alpha < \beta$, denote
\begin{gather*}
 S_{(\beta, \alpha)} = -\frac{q w_{\beta} - w_{\alpha}}{q w_{\alpha} - w_{\beta}}.
\end{gather*}
For a permutation $\sigma \in S_n$, we say $(\beta, \alpha)$ is an \emph{inversion} of $\sigma$ if $\alpha < \beta$ and $\sigma^{-1}(\alpha) > \sigma^{-1}(\beta)$. Then let
\begin{gather*}
 A_{\sigma}(w_1, \dotsc, w_n) = \prod_{(\beta, \alpha) \text{ is an inversion of } \sigma} S_{(\beta, \alpha)}.
\end{gather*}
We write $A_{\sigma} = A_{\sigma}(w_1, \dotsc, w_n)$ if there is no possibility of confusion, but later we occasionally use dif\/ferent variables for $A_{\sigma}$. Note that for $\sigma = \id \in S_n$, since the identity permutation has no inversion, we let $A_{\id} = 1$. For $X = (x_1, \dotsc, x_n) \in \intZ^n$, $Y = (y_1, \dotsc, y_n) \in \intZ^n$ and $\sigma \in S_n$, def\/ine
\begin{gather} \label{eq:integral_Lambda}
 \Lambda_Y(X; t; \sigma) = \left( \prod^n_{k = 1} \frac{-1}{b_{x_k}} \right) \dashint_{C_1} dw_1 \dotsi \dashint_{C_n} dw_n A_{\sigma} \prod^n_{j = 1} \left[ \prodprime^{x_j}_{k = y_{\sigma(j)}} \left( \frac{b_k}{b_k - w_{\sigma(j)}} \right) e^{-w_j t} \right],
\end{gather}
where the notation $\dashint_{C_i} dw_i$ is a shorthand for $(2\pi i)^{-1} \oint_{C_i} dw_i$ and the notation $\prod'$ is an extension of the usual $\prod$ notation such that
\begin{gather*}
 \prodprime^n_{k = m} f(k) =
 \begin{cases}
 \displaystyle \prod^n_{k = m} f(k) & \text{if $n \geq m$}, \\
 1 & \text{if $n = m - 1$}, \\
\displaystyle \prod^{m - 1}_{k = n + 1} \frac{1}{f(k)} & \text{if $n < m - 1$},
 \end{cases}
\end{gather*}
for example, 
 $\prod\limits^{-3}_{k = 0}\!\!{\vphantom{\prod}}' f(k) = 1/[f(-1) f(-2)]$. Later we use the identity
\begin{gather*}
 \left( \prodprime^x_{k = m + 1} f(k) \right) \left( \prodprime^m_{k = y} f(k) \right) = \prodprime^x_{k = y} f(k).
\end{gather*}
With respect to any $w_j$, the integrand in \eqref{eq:integral_Lambda} has explicit poles in the form of $w_j = b_k$ for some~$k$, and the factor $A_{\sigma}$ may introduce poles in the form of $w_j = q w_i$ and $w_j = q^{-1} w_l$ for some~$i$ and~$l$ such that $i < j$ and $l > j$. We require that poles $b_k$ and $q w_i$ are enclosed by~$C_j$, but~$q^{-1} w_l$ are not. To be concrete, we can take that $C_j = \{ \lvert z \rvert = R \}$ for all~$j$ with a large enough~$R$, and choose the counter-clockwise orientation.
\begin{Theorem} \label{thm:main}
 Let $X, Y \in \Weylchamber^n$. Then
 \begin{gather} \label{eq:main_result}
 P_Y(X; t) = \frac{1}{W(X)} \sum_{\sigma \in S_n} \Lambda_Y(X; t; \sigma).
 \end{gather}
\end{Theorem}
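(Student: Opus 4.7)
The plan is to follow the Bethe ansatz approach of Tracy--Widom, as adapted to the $q$-TAZRP by Korhonen and Lee in~\cite{Korhonen-Lee14} for the homogeneous case, and to extend their argument to the site-dependent rates $b_k$. Denote the right-hand side of~\eqref{eq:main_result} by $u_Y(X;t)$. Because~\eqref{eq:master_eq_concrete} is a linear first-order ODE system on $\Weylchamber^n$ with a unique bounded solution for given initial data, it suffices to verify that (i) $u_Y$ satisfies~\eqref{eq:master_eq_concrete} for every $X\in\Weylchamber^n$, and (ii) $u_Y(X;0)=\delta_{X,Y}$; this identifies $u_Y$ with $P_Y$.

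For (i), differentiating $\Lambda_Y(X;t;\sigma)$ in $t$ under the contour integral produces the factor $-\sum_j w_j$ from the exponentials. On the other hand, each shift $X\mapsto X^{K_i,-}$ on the right-hand side of~\eqref{eq:master_eq_concrete} alters the $\prodprime$-product at exactly one index $j=K_i$, introducing a factor $(b_{x_j}-w_{\sigma(j)})/b_{x_j}$ via the identity recorded just after~\eqref{eq:integral_Lambda}; together with the prefactor change $-1/b_{s_i}\to -1/b_{s_i-1}$ and the relation $b_k=(1-q)a_k$, one checks that for free configurations (all $x_j$ distinct, so all $k_i=1$) the master equation holds term by term in $\sigma$. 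On configurations containing a stack of height $k_i>1$, the required enhancement $(1-q^{k_i})/(1-q)=1+q+\dotsb+q^{k_i-1}$ must be generated by the sum over $\sigma$; the standard device is to extend $u_Y$ to all $X\in\intZ^n$ by the same formula and impose the two-body boundary relation forced by $S_{(\beta,\alpha)}$, namely that pairing $\sigma$ with $\sigma\cdot(i,i+1)$ yields $A_{\sigma\cdot(i,i+1)}/A_\sigma=S_{(\beta,\alpha)}^{\pm1}$. The explicit form $S_{(\beta,\alpha)}=-(qw_\beta-w_\alpha)/(qw_\alpha-w_\beta)$ is precisely what makes successive pairings regenerate the $q$-deformed coefficients, and because $S_{(\beta,\alpha)}$ does not depend on the $b_k$'s, the combinatorial core of the argument is identical to the homogeneous case.

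For (ii), at $t=0$ the integrand is rational in the $w_j$ with poles at $w_j=b_k$ (from the $\prodprime$-factors) and at $w_j=qw_i$ for $i<j$ (from $A_\sigma$). When $\sigma=\id$ and $X=Y$, each single-variable integral collapses by $\Res_{w_j=b_{y_j}}(b_{y_j}/(b_{y_j}-w_j))=-b_{y_j}$, which combined with the prefactor $\prod_k(-1/b_{y_k})$ yields $\Lambda_Y(Y;0;\id)=1$. For $X\ne Y$, I would show $\sum_\sigma\Lambda_Y(X;0;\sigma)=0$ by pairing permutations across adjacent transpositions: the contour specification (enclosing $qw_i$ but not $q^{-1}w_l$) is exactly what makes the residues at the $S$-poles of paired $\sigma$'s cancel, after which a contour-deformation/symmetrization argument handles the residues at the $b_k$-poles. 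When $X=Y$ has stacks of heights $k_1,\dots,k_m$, the permutations internal to each stack give identical contributions summing to $\prod[k_i]_q!=W(Y)$, which cancels the $1/W(X)$ prefactor exactly.

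The hardest step will be (i) at configurations with several simultaneous stacks, where multiple two-body relations must be enforced in concert and where the inhomogeneous poles $b_k$ are genuinely interleaved with the permutation combinatorics in a way absent from~\cite{Korhonen-Lee14}. I expect that an inductive pairing argument over the stacks, together with the observation that $A_\sigma$ (and hence the scattering structure) is independent of the $b_k$'s, will allow the Korhonen--Lee argument to carry through essentially intact, with the site-dependent coefficients $a_{s_i}(1-q^{k_i})$ on the right-hand side of~\eqref{eq:master_eq_concrete} emerging automatically from the interaction between the $\prodprime$-shift, the $-1/b_{x_j}$ prefactor, and the identity $b_k=(1-q)a_k$.
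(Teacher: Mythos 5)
Your plan for part (i) is essentially the paper's route. The paper isolates this step as a lemma: if $u^0$ is defined on all of $\intZ^n$, satisfies the free evolution equation \eqref{eq:general_evolution_u^0} and the two-body boundary condition \eqref{eq:general_BC}, then $u^0(X;t)/W(X)$ satisfies the master equation \eqref{eq:master_eq_concrete}, the coefficient $(1-q^{k_i})/(1-q)$ being produced by iterating \eqref{eq:general_BC} down each stack together with the ratio $W(X)/W(X^{K_i,-})$. The boundary condition for $\sum_\sigma\Lambda_Y$ is then checked exactly by your pairing $\mu\leftrightarrow\mu\circ(k,k+1)$, using $A_{\nu}=A_{\mu}S_{(\beta,\alpha)}$ and one short algebraic identity; and your observation that $S_{(\beta,\alpha)}$ does not involve the $b_k$'s, so that the scattering combinatorics is inherited verbatim from \cite{Korhonen-Lee14}, is precisely why the inhomogeneous generalization costs nothing at this stage.

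The genuine gap is in part (ii), the initial condition, which is the most delicate part of the proof and where your proposed mechanism does not match what actually happens. You suggest killing $\sum_\sigma\Lambda_Y(X;0;\sigma)$ for $X\neq Y$ by pairwise cancellation of residues at the $S$-poles $w_j=qw_i$ across adjacent transpositions. In fact those poles never enter: the paper's argument is per-permutation and inductive in $n$. Decomposing $S_n$ according to $k=\sigma(1)$ and $m=\sigma^{-1}(1)$, one shows that $\Lambda_Y(X;0;\sigma)=0$ unless $x_1\leq y_k$ (push $C_k$ to a circle of radius $R\to\infty$; the integrand is $\bigO\big(w_k^{y_k-x_1-1}\big)$) and unless $x_m\geq y_1$ (otherwise the $w_1$-integrand has no pole inside $C_1$); with the orderings on $X$ and $Y$ this forces $x_1=\dots=x_m=y_1=\dots=y_k$. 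In that case the $w_k$-integral evaluates to $-q^{k-1}b_{x_1}$, the factor $q^{k-1}$ coming from the $k-1$ factors $(qw_k-w_j)/(w_k-qw_j)$ at $w_k=\infty$, and $\Lambda_Y(X;0;\sigma)$ reduces to $q^{k-1}$ times an $(n-1)$-particle integral, so the claim follows by induction on $n$. No two permutations cancel against each other at any stage, and ``a contour-deformation/symmetrization argument handles the residues at the $b_k$-poles'' is not yet an argument. Relatedly, your assertion that the permutations internal to a stack give \emph{identical} contributions summing to $\prod_i[k_i]_q!$ cannot be right: there are $\prod_i k_i!$ such permutations, so their contributions must be weighted by powers of $q$ (the inversion statistic, whose generating function is the $q$-factorial), which is exactly what the factors $q^{k-1}$ above supply via $\sum_{k=1}^{k_1}q^{k-1}\,W((x_2,\dotsc,x_n))=W(X)$. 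Without the vanishing lemma and the induction on $n$, part (ii) is not established.
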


\begin{Remark}
 The prefactor $W(X)$ is the same $W(X)$ in \cite[Theorem~2.1]{Korhonen-Lee14} in the homogeneous case. It corresponds to the fact that the model is PT symmetric with respect to the invariant measure, and $W(X)$, like the $C_q(\vec{n})$ in~\cite{Borodin-Corwin-Petrov-Sasamoto15}, gives the invariant measure for the $q$-TAZRP process, up to a constant. See \cite[Remark~2.3]{Borodin-Corwin-Petrov-Sasamoto15} and \cite[Corollary~3.3.12]{Borodin-Corwin13}.
\end{Remark}
With special initial conditions, the transition probability formula~\eqref{eq:main_result} can be simplif\/ied. Here we consider the initial condition $y_1 = y_2 = \dotsb = y_n = 0$. To state the result, we denote
\begin{gather}
 B(w_1, \dotsc, w_n) = \prod_{1 \leq i < j \leq n} \frac{w_i - w_j}{w_i - q w_j}.
\end{gather}
\begin{Corollary} \label{cor:step_init}
 Let $X \in \Weylchamber^n$. Then
 \begin{gather}
 P_{(0, 0, \dotsc, 0)}(X; t) = [n]_q! \frac{1}{W(X)} \left( \prod^n_{k = 1} \frac{-1}{b_{x_k}} \right) \nonumber\\
 \hphantom{P_{(0, 0, \dotsc, 0)}(X; t) =}{}
 \times \dashint_{C_1} dw_1 \dotsi \dashint_{C_n} dw_n B(w_1, \dotsc, w_n) \prod^n_{j = 1} \left[ \prodprime^{x_j}_{k_j = 0} \left( \frac{b_{k_j}}{b_{k_j} - w_j} \right) e^{-w_j t} \right].\label{eq:step_init_trans_prob}
 \end{gather}
\end{Corollary}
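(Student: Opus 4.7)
My plan is to specialize Theorem~\ref{thm:main} to $Y = (0, \dotsc, 0)$ and then use a symmetrization over $S_n$. Plugging $y_1 = \dotsb = y_n = 0$ into~\eqref{eq:integral_Lambda} eliminates the permutation inside the lower limit of the inner $\prodprime$: for every $\sigma \in S_n$, the $j$-th factor reduces to $\prodprime_{k=0}^{x_j} b_k/(b_k - w_{\sigma(j)})$, which depends on~$\sigma$ only through the argument $w_{\sigma(j)}$. All remaining $\sigma$-dependence sits in the prefactor $A_\sigma(w_1, \dotsc, w_n)$.

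Since all contours $C_i$ can be taken to coincide with a single circle $\{\lvert z \rvert = R\}$ as prescribed in the setup preceding Theorem~\ref{thm:main}, the substitution $u_j := w_{\sigma(j)}$, equivalently $w_i = u_{\sigma^{-1}(i)}$, is merely a relabeling of the $n$ integration variables and preserves the domain of integration. Under it, $\prod_j e^{-w_j t}$ reindexes to $\prod_j e^{-u_j t}$, and
\begin{gather*}
\prod_{j=1}^{n} \prodprime_{k=0}^{x_j} \frac{b_k}{b_k - w_{\sigma(j)}} = \prod_{j=1}^{n} \prodprime_{k=0}^{x_j} \frac{b_k}{b_k - u_j}
\end{gather*}
becomes manifestly $\sigma$-independent, while $A_\sigma(w_1, \dotsc, w_n)$ turns into $A_\sigma(u_{\sigma^{-1}(1)}, \dotsc, u_{\sigma^{-1}(n)})$. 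Exchanging the finite sum with the integral, the proof of~\eqref{eq:step_init_trans_prob} reduces to the combinatorial identity
\begin{gather*}
\sum_{\sigma \in S_n} A_\sigma\bigl(u_{\sigma^{-1}(1)}, \dotsc, u_{\sigma^{-1}(n)}\bigr) = [n]_q!\, B(u_1, \dotsc, u_n).
\end{gather*}

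The crux of the corollary is this symmetrization identity. It involves only the scattering factors $S_{(\beta,\alpha)}$ and not the conductances $b_k$, so it is \emph{exactly} the identity used by Korhonen and Lee in the homogeneous step-initial-condition derivation and can be imported verbatim from their argument. For a self-contained proof I would proceed by induction on $n$: the base case $n = 2$ is a one-line computation yielding the factor $1 + q = [2]_q!$, and the inductive step is a telescoping obtained from the two-body recursion satisfied by the coefficients $S_{(\beta,\alpha)} = -(qw_\beta - w_\alpha)/(qw_\alpha - w_\beta)$, grouping permutations according to the position of the symbol $n$. Once the identity is established, substituting it back and collecting the $\sigma$-independent constants immediately yields~\eqref{eq:step_init_trans_prob}; every other step is bookkeeping.
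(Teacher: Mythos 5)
Your proposal is correct and follows essentially the same route as the paper: specialize Theorem~\ref{thm:main} to $Y=(0,\dotsc,0)$, relabel the integration variables using that the contours coincide, and reduce to the symmetrization identity $\sum_{\sigma}A_{\sigma}(w_{\sigma^{-1}(1)},\dotsc,w_{\sigma^{-1}(n)})=[n]_q!\,B(w_1,\dotsc,w_n)$, which the paper also proves by induction on $n$ (grouping permutations by the preimage of $1$ and finishing with a residue computation). No gaps.
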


We note that the transition probability formula \eqref{eq:step_init_trans_prob} shares formal similarity with the moment formulas in the $q$-TASEP with various initial conditions, see \cite[Theorem~2.11 and Corollary~2.12]{Borodin-Corwin-Sasamoto14}, where the parameters~$a_i$ correspond to the~$a_i$ in our paper. This is not a coincidence: The duality between the $q$-TASEP and $q$-TAZRP implies that the moment formula for $q$-TASEP satisf\/ies the true evolution equation for $q$-TAZRP, as explained in~\cite{Borodin-Corwin-Sasamoto14}. In the special case where $a_i$ are identical, moment formulas in $q$-TASEP are derived from properties of $q$-TAZRP in~\cite{Borodin-Corwin-Petrov-Sasamoto15}.

\subsection*{Organization of the paper}

The basic idea of our proof to Theorem \ref{thm:main} is the Bethe ansatz, following the approach of~\cite{Tracy-Widom08} and~\cite{Korhonen-Lee14}. In Section~\ref{sec:1pt} we consider the one particle case where the interaction between particles is missing as a warm-up. In Section~\ref{sec:2_particle_case} the two particle case, the simplest non-trivial case, is analysed explicitly, and in Section~\ref{sec:n_particle_case} the general case is worked out. The essence of the Bethe ansatz is that the interaction among multiple particles is equivalent to the superposition of two-particle interactions. We use the idea implicitly. At last, we prove Corollary~\ref{cor:step_init} in Section~\ref{sec:proof_of_cor}.

\section{One particle case} \label{sec:1pt}

In the simplest case that there are only one particle $x_1$, the master equation \eqref{eq:master_eq_concrete} becomes (we write $X = (x)$ as $x$)
\begin{gather*} 
 \frac{d}{dt} P(x; t) = b_{x - 1} P(x - 1; t) - b_{x} P(x; t).
\end{gather*}
Given the initial condition that the particle is at $y \in \intZ$ at time $t = 0$, that is, $P(y; 0) = 1$ and $P(x; 0) = 0$ if $x \neq y$, we can compute $P_y(x; t)$ by elementary probability. Since the particle jumps only to the right, so $P_y(x; t) = 0$ for all $x < y$. The time that the particle f\/irst jumps away from $y$ is in the exponential distribution with rate~$b_y$, and after it jump away, it cannot be back, so $P_y(y; t) = e^{-b_y t}$. Next, $P_y(y + 1; t)$ is the probability that the particle jumps exactly once from time~$0$ to~$t$, and it is
\begin{gather*}
 \int^t_0 b_y e^{-b_y s} e^{-b_{y + 1} (t - s)} ds = b_y (b_{y + 1} - b_y)^{-1} \big(e^{-b_y t} - e^{-b_{y + 1} t}\big), \qquad \text{if $b_{y + 1} \neq b_y$}.
\end{gather*}
Inductively, we use
\begin{gather*}
 P_y(y + k; t) =   \int^t_0 b_y e^{-b_y s_1} \left( \int^t_{s_1} b_{y + 1} e^{-b_{y + 1} s_2} \left( \int^t_{s_2} \dotsb \vphantom{\int^t_{s_{k - 1}}} \right. \right. \\
\hphantom{P_y(y + k; t) =}{} \times \left. \left. \left( \int^t_{s_{k - 1}} b_{y + k - 1} e^{-b_{y + k - 1} s_k} e^{-b_{y + k} (t - s_k)} ds_k \right) \dotsb ds_3 \right) ds_2 \right) ds_1 \\
\hphantom{P_y(y + k; t)}{} =  \int^t_0 b_y e^{-b_y s_1} P_{y + 1}(y + k; t - s_1) ds_1,
\end{gather*}
and derive
\begin{gather*}
 P_y(x; t) =
 \begin{cases}
 0 & \text{if $x < y$}, \\
 e^{-b_x t} & \text{if $x = y$}, \\
 \displaystyle \left( \prod^{x - 1}_{k = y} b_k \right) \sum^x_{k = y} \left( \prod_{y \leq j \leq x,\ j \neq k} \frac{1}{b_j - b_k} \right) e^{-b_k t} & \text{if $x > y$}.
 \end{cases}
\end{gather*}
Here and later, we assume $b_k$ are distinct, and understand the formulas by \lHopital's rule if some of them are identical. A more convenient way to express $P_y(x; t)$ is ($w = 1 - z^{-1}$)
\begin{subequations}
 \begin{gather}
 P_y(x; t) =   \frac{-1}{b_x} \dashint_C \prodprime^x_{k = y} \frac{b_k}{b_k - w} e^{-wt} dw \label{eq:1_particle_a} \\
\hphantom{P_y(x; t)}{}  =   \frac{1}{b_x} \dashint_{\tilde{C}} \left( \prodprime^x_{k = y} \frac{b_k z}{1 - z(1 - b_k)} \right) e^{(z^{-1} - 1)t} \frac{dz}{z^2}, \label{eq:1_particle_b}
 \end{gather}
\end{subequations}
where the contour $C$ encloses all the poles $b_k$ in positive orientation, and the contour $\tilde{C}$ encloses the pole $0$ in positive orientation. If some $b_k \neq 1$, then $\tilde{C}$ should not enclose poles $(1 - b_k)^{-1}$. Thus the $n = 1$ case of Theorem~\ref{thm:main} is proved.

\begin{Remark}
 Although the expression~\eqref{eq:1_particle_b} of $P_y(x; t)$ seems unnatural, it reduces to the $n = 1$ case \cite[Theorem~2.1]{Korhonen-Lee14} when $b_k = 1$ for all $k \in \intZ$.
\end{Remark}

\section{Two particle case} \label{sec:2_particle_case}

In the $2$-particle case, where the two particles are at $x_1 \geq x_2$, the master equation \eqref{eq:master_eq_concrete} can be written as
\begin{gather} \label{eq:master_eq_2_particles}
 \frac{d}{dt} P((x_1, x_2); t) =
 \begin{cases}
 b_{x_1 - 1} P((x_1 - 1, x_2); t) - b_{x_1} P((x_1, x_2); t) & \\
{} + b_{x_2 - 1} P((x_1, x_2 - 1); t) - b_{x_2} P((x_1, x_2); t),
 & \text{if $x_1 > x_2 + 1$}, \\
 (1 + q) b_xP((x, x); t) - b_{x + 1} P((x + 1, x); t) & \text{if $x_2 = x$} \\
{} + b_{x - 1} P((x + 1, x - 1); t) - b_x P((x + 1, x); t),
& \text{and $x_1 = x + 1$}, \\
 b_{x - 1} P((x, x - 1); t) - (1 + q) b_x P((x, x); t) & \text{if $x_1 = x_2 = x$}.
 \end{cases}\!\!\!\!
\end{gather}
and we want to solve it under the initial condition ($y_1 \geq y_2$)
\begin{gather} \label{eq:init_C_2_particles}
 P((y_1, y_2); 0) = 1, \qquad \text{and} \qquad P((x_1, x_2); 0) = 0 \qquad \text{for all $(x_1, x_2) \in \Weylchamber^2 {\setminus} \{ (y_1, y_2) \}$}.\!\!\!
\end{gather}
To simplify the master equation, we consider functions $u^0((x_1, x_2); t)$ that are dif\/ferentiable in $t \geq 0$, for all $(x_1, x_2) \in \intZ^2$. By contrast, the probabilities $P((x_1, x_2); t)$ can also be viewed as a~dif\/ferentiable function in $t$, but it is only def\/ined when $(x_1, x_2) \in \Weylchamber^2$.

If the functions $u^0((x_1, x_2); t)$ satisfy the dif\/ferential equations (so-called free particle evolution equations)
\begin{gather}
 \frac{d}{dt} u^0((x_1, x_2); t) = b_{x_1 - 1} u^0((x_1 - 1, x_2); t) - b_{x_1} u^0((x_1, x_2); t) \nonumber\\
 \hphantom{\frac{d}{dt} u^0((x_1, x_2); t) =}{} + b_{x_2 - 1} u^0((x_1, x_2 - 1); t) - b_{x_2} u^0((x_1, x_2); t),\label{eq:evolution_eq_2_particles}
\end{gather}
and in addition, they satisfy the relations (so-called boundary condition)
\begin{gather} \label{eq:BC_2_particles}
 b_{x - 1} u^0((x - 1, x); t) = b_{x - 1} q u^0((x, x - 1); t) + b_x (1 - q) u^0((x, x); t) \qquad \text{for all $x \in \intZ$},\!\!\!
\end{gather}
then we have that the functions with $t \geq 0$ and $(x_1, x_2) \in \Weylchamber^2$
\begin{gather} \label{eq:relation_u^0_u_2_particles}
 u((x_1, x_2); t) = \frac{1}{W((x_1, x_2))} u^0((x_1, x_2); t),
 \end{gather}
where
\begin{gather*}
W((x_1, x_2)) =
 \begin{cases}
 1 & \text{if $x_1 > x_2$}, \\
 1 + q & \text{if $x_1 = x_2$},
 \end{cases}
\end{gather*}
satisfy the master equation \eqref{eq:master_eq_2_particles} with $P((x_1, x_2); t) = u((x_1, x_2); t)$. To see it, we note that if $x_1 > x_2 + 1$, then \eqref{eq:master_eq_2_particles} is the same as \eqref{eq:evolution_eq_2_particles} with $P((x_1, x_2); t) = u^0((x_1, x_2); t)$; if $x_1 = x + 1$ and $x_2 = x$, then~\eqref{eq:master_eq_2_particles} is equivalent to~\eqref{eq:evolution_eq_2_particles} with $P((x + 1, x); t) = u^0((x + 1, x); t)$, $P(x + 1, x - 1); t) = u^0((x + 1, x - 1); t)$, and $P((x, x); t) = (1 + q)^{-1} u^0((x_1, x_2); t)$; if $x_1 = x_2 = x$, and we let $P((x, x); t) = u^0((x, ); t)$ and $P((x, x - 1); t) = u^0((x, x - 1); t)$, then the right-hand side of then~\eqref{eq:master_eq_2_particles} is the same as $(1 + q)^{-1}$ times \eqref{eq:evolution_eq_2_particles}, assuming the identity~\eqref{eq:BC_2_particles}. Thus~\eqref{eq:evolution_eq_2_particles},~\eqref{eq:BC_2_particles} and~\eqref{eq:relation_u^0_u_2_particles} imply that $u((x_1, x_2); t)$ satisfy~\eqref{eq:master_eq_2_particles}.

Hence by solving \eqref{eq:evolution_eq_2_particles} with the boundary condition \eqref{eq:BC_2_particles} and the partial initial condition on $\Weylchamber^2$: $u^{(0)}(y_1, y_2; 0) = W((y_1, y_2))$ and $u^{(0)}(x_1, x_2; 0) = 0$ if $(x_1, x_2) \in \Weylchamber^2 {\setminus} \{ (y_1, y_2) \}$, we solve the master equation~\eqref{eq:master_eq_2_particles} with the initial condition~\eqref{eq:init_C_2_particles}.

\begin{Remark} \label{rmk:inverse_2pt}
 It is not obvious that from a solution $u((x_1, x_2); t)$ to \eqref{eq:master_eq_2_particles} where $(x_1, x_2) \in \Weylchamber^2$, we can construct $u^0((x_1, x_2); t)$ for $(x_1, x_2) \in \intZ^2$, that satisfy \eqref{eq:relation_u^0_u_2_particles} if $(x_1, x_2) \in \Weylchamber^2$, as well as solve \eqref{eq:evolution_eq_2_particles} and \eqref{eq:BC_2_particles}. Although it is a digression of our main theorem, for its own interest, we describe the construction is as follows:
\begin{gather*}
 u^0((x_1, x_2); t) = u((x_1, x_2); t) \quad \text{if $x_1 > x_2$}, \qquad u^0((x, x); t) = (1 + q) u((x, x); t), \\
 u^0((x - 1, x); t) = q u((x, x - 1); t) + \frac{b_x}{b_{x - 1}} \big(1 - q^2\big) u((x, x); t),
\end{gather*}
and then inductively with respect to $k \geq 1$, under the assumption that any $u^0((x_1, x_2); t)$ is expressed as linear combinations of $u((z_1, z_2); t)$ if $x_1 \geq x_2 - k$,
\begin{gather}
 u^0((x - (k + 1), x); t) =
 \frac{1}{b_{x - (k + 1)}} \bigg[ \frac{d}{dt} u^0((x - k, x); t) \nonumber\\
 \hphantom{u^0((x - (k + 1), x); t) = }{} - b_{x - 1} u^0((x - k, x - 1); t) + (b_{x - k} + b_x) u^0((x - k, x); t) \bigg].\label{eq:indct_u^0_in_u}
\end{gather}
Note that in \eqref{eq:indct_u^0_in_u}, not only are $u^0((x - k, x - 1); t)$ and $u^0((x - k, x); t)$ expressed by linear combinations of $u((z_1, z_2); t)$, but also $\frac{d}{dt} u^0((x - k, x); t)$ is the linear combinations of $\frac{d}{dt} u((z_1, z_2); t)$, and in turn the linear combinations of $u((z_1, z_2); t)$. For example, for $k = 1$,
\begin{gather*}
   u^0((x - 2, x); t) \\
 =   \frac{1}{b_{x - 2}} \left[ \frac{d}{dt} u^0((x - 1, x); t) - b_{x - 1} u^0((x - 1, x - 1); t) + (b_{x - 1} + b_x) u^0((x - 1, x); t) \right] \\
 =   \frac{1}{b_{x - 2}} \left[ \frac{d}{dt} \left( q u((x, x - 1); t) + \frac{b_x}{b_{x - 1}} \big(1 - q^2\big) u((x, x); t) \right) - b_{x - 1} (1 + q) u((x - 1, x - 1); t) \right. \\
  \hphantom{=\frac{1}{b_{x - 2}}} \left. \vphantom{\frac{d}{dt}} + (b_{x - 1} + b_x) \left( q u((x, x - 1); t) + \frac{b_x}{b_{x - 1}} \big(1 - q^2\big) u((x, x); t) \right) \right] \\
 =   \frac{1 - q^2}{b_{x - 2}} \left[ \left( b_x + q \frac{b^2_x}{b_{x - 1}} \right) u((x, x); t) + b_x u((x, x - 1); t) - b_{x - 1} u((x - 1, x - 1); t) \right. \\
  \hphantom{=\frac{1 - q^2}{b_{x - 2}}} + \left. \frac{q}{1 - q^2} b_{x - 2} u((x, x - 2); t) \right].
\end{gather*}
\end{Remark}

We note that for any meromorphic function $f(w_1, w_2)$, the integral with parameters~$x_1$,~$x_2$,~$t$
\begin{gather*}
 I_f((x_1, x_2); t) = \frac{1}{b_{x_1} b_{x_2}} \dashint_{C_1} \! dw_1 \dashint_{C_2} \! dw_2 f(w_1, w_2) \left( \prodprime^{x_1}_{k = 1} \frac{b_k}{b_k - w_1} \right) \! \left( \prodprime^{x_2}_{k = 1} \frac{b_k}{b_k - w_2} \right) e^{-w_1 t} e^{-w_2 t}
\end{gather*}
satisf\/ies the dif\/ferential equations \eqref{eq:evolution_eq_2_particles} for $u^0((x_1, x_2); t)$. To see it, we compute
\begin{alignat*}{3}
 & \frac{d}{dt} I_f((x_1, x_2); t) =   I_g((x_1, x_2); t), \qquad && \text{where} \quad  g(w_1, w_2) =   -(w_1 + w_2) f(w_1, w_2),& \\
& I_f((x_1 - 1, x_2); t) =   I_{f_1}((x_1, x_2); t), \qquad && \text{where} \quad f_1(w_1, w_2) =   \frac{b_{x_1} - w_1}{b_{x_1 - 1}} f(w_1, w_2),& \\
& I_f((x_1, x_2 - 1); t) =   I_{f_2}((x_1, x_2); t), \qquad && \text{where} \quad  f_2(w_1, w_2) =   \frac{b_{x_2} - w_2}{b_{x_2 - 1}} f(w_1, w_2).&
\end{alignat*}
We then have that for $I_f((x_1, x_2); t)$ to satisfy \eqref{eq:evolution_eq_2_particles}, it suf\/f\/ices to show $g = b_{x_1 - 1} f_1 + b_{x_2 - 1} f_2 - (b_{x_1} + b_{x_2}) f$, which is straightforward to verify.

In the same way as above, we have that $I_f((x_1, x_2); t)$ satisf\/ies the boundary condition \eqref{eq:BC_2_particles} for $u^0((x_1, x_2); t)$ if and only if for all $x \in \intZ$
\begin{gather} \label{eq:integral_boundary_C}
 \dashint_{C_1} dw_1 \dashint_{C_2} dw_2 f(w_1, w_2) (qw_2 - w_1) \left( \prodprime^x_{k = 1} \frac{b_k}{b_k - w_1} \right) \left( \prodprime^x_{k = 1} \frac{b_k}{b_k - w_2} \right) e^{-w_1 t} e^{-w_2 t} = 0.\!\!\!
\end{gather}
Then inspired by the constructions in \cite{Korhonen-Lee14}, we take $f(w_1, w_2) = F_{y_1, y_2}(w_1, w_2)$, where $(y_1, y_2) \in \intZ^2$, and
\begin{gather*}
 F_{y_1, y_2}(w_1, w_2) = \left( \prodprime^0_{k = y_1} \frac{b_k}{b_k - w_1} \right) \left( \prodprime^0_{k = y_2} \frac{b_k}{b_k - w_2} \right) \\
 \hphantom{F_{y_1, y_2}(w_1, w_2) =}{}
 - \frac{qw_1 - w_2}{qw_2 - w_1} \left( \prodprime^0_{k = y_2} \frac{b_k}{b_k - w_1} \right) \left( \prodprime^0_{k = y_1} \frac{b_k}{b_k - w_2} \right).
\end{gather*}
It is straightforward to check that this $I_{F_{y_1, y_2}}((x_1, x_2); t)$ satisf\/ies identity~\eqref{eq:integral_boundary_C}.

Now we consider the value of $I_{F_{y_1, y_2}}((x_1, x_2); 0)$ for $(x_1, x_2)$ and $(y_1, y_2) \in \Weylchamber^2$. First, in the double integral
\begin{gather} \label{eq:int_1_init_2pt}
 \dashint_{C_1} dw_1 \dashint_{C_2} dw_2 \left( \prodprime^{x_1}_{k = y_1} \frac{b_k}{b_k - w_1} \right) \left( \prodprime^{x_2}_{k = y_2} \frac{b_k}{b_k - w_2} \right),
\end{gather}
if $x_2 < y_2$, then the inner integral with respect to $w_2$ vanishes, since the integrand is holomorphic with respect to $w_2$; if $x_2 > y_2$, then by enlarging the contour $C_2$ to $\{ \lvert w_2 \rvert = M \}$ with $M \to \infty$, we still have that the inner integral vanishes. Next, in the double integral
\begin{gather} \label{eq:int_2_init_2pt}
 \dashint_{C_1} dw_1 \dashint_{C_2} dw_2 \frac{qw_1 - w_2}{qw_2 - w_1} \left( \prodprime^{x_1}_{k = y_2} \frac{b_k}{b_k - w_1} \right) \left( \prodprime^{x_1}_{k = y_1} \frac{b_k}{b_k - w_2} \right),
\end{gather}
if $x_2 < y_2$, then $x_2 < y_1$, and we have that the inner integral with respect to $w_2$ vanishes if $x_2 < y_1$, since the integrand has no pole within $C_2$ with respect to $w_2$; if $x_2 > y_2$, then $x_1 > y_2$, and by enlarging the contour $C_1$ to $\{ \lvert w_1 \rvert = M \}$ with $M \to \infty$, we have that the integral on~$C_1$ with respect to $w_1$ vanishes for $x_1 > y_2$. So the integral~\eqref{eq:int_1_init_2pt} vanishes unless $x_2 = x_1$, and the integral~\eqref{eq:int_2_init_2pt} vanishes unless $x_1 = x_2 = y_1 = y_2 = y$. In the case that $x_2 = y_2$, the integral~\eqref{eq:int_1_init_2pt} is equal to $b_{y_1} b_{y_2}$. Similarly, if $x_1 = x_2 = y_1 = y_2$, the integral~\eqref{eq:int_2_init_2pt} is equal to~$q b^2_y$. Since $I_{F_{y_1, y_2}}(x_1, x_2); 0)$ is $(b_{x_1} b_{x_2})^{-1}$ times the dif\/ference between the two integrals in~\eqref{eq:int_1_init_2pt} and~\eqref{eq:int_2_init_2pt}, we have that
\begin{gather*}
 I_{F_{y_1, y_2}}((x_1, x_2); 0) =
 \begin{cases}
 W((x_1, x_2)) & \text{if $(x_1, x_2) = (y_1, y_2)$}, \\
 0 & \text{if $(x_1, x_2) \in \Weylchamber^2 {\setminus} \{ (y_1, y_2) \}$}.
 \end{cases}
\end{gather*}

We conclude that $u^0((x_1, x_2); t) = I_{F_{y_1, y_2}}((x_1, x_2); t)$ is a solution to \eqref{eq:evolution_eq_2_particles} and \eqref{eq:BC_2_particles}, and by~\eqref{eq:relation_u^0_u_2_particles} it yields $u((x_1, x_2); t)$ that is a solution to the master equation \eqref{eq:master_eq_2_particles} with the initial condition \eqref{eq:init_C_2_particles}. Note that our solution $u((x_1, x_2); t)$ is expressed by a double contour integral formula, and it agrees with the $n = 2$ case of~\eqref{eq:main_result} after a simple change of variables. Thus the $n = 2$ case of Theorem~\ref{thm:main} is proved.

\section[$n$ particle case]{$\boldsymbol{n}$ particle case} \label{sec:n_particle_case}

Now we prove Theorem \ref{thm:main} in the general form. Our strategy is the same as in the proof in the $n = 2$ case in Section~\ref{sec:2_particle_case}.

First we consider functions $u^0(X; t)$, where $X = (x_1, \dotsc, x_n) \in \intZ^n$, such that they are dif\/ferentiable functions in $t$, and satisfy the free particle evolution equation
\begin{gather} \label{eq:general_evolution_u^0}
 \frac{d}{dt} u^0(X; t) = \sum^n_{k = 1} \big( b_{x_k - 1} u^0\big(X^{k, -}; t\big) - b_{x_k} u^0(X; t) \big).
\end{gather}
We also consider the boundary conditions that generalize \eqref{eq:BC_2_particles}, such that for all $k = 1, \dotsc, n - 1$
\begin{gather}
 b_{x - 1} u^0((\dotsc, x_{k - 1}, \underbrace{x - 1, x}_{\text{$x_k$ and $x_{k + 1}$}}, x_{k + 2}, \dotsc); t) = q b_{x - 1} u^0((\dotsc, x_{k - 1}, x, x - 1, x_{k + 2}, \dotsc); t) \nonumber\\
 \qquad{}
+ (1 - q) b_x u^0((\dotsc, x_{k - 1}, x, x, x_{k + 2}, \dotsc); t).\label{eq:general_BC}
\end{gather}
Then let for all $X \in \Weylchamber^n$
\begin{gather} \label{eq:general_u_by_u^0}
 u(X; t) = \frac{1}{W(X)} u^0(X; t).
\end{gather}
We have the following result:
\begin{Lemma} \label{lem:BA_coefficients}
 Suppose $u^0(X; t)$ is a solution to the free particle evolution equation~\eqref{eq:general_evolution_u^0} with boundary condition~\eqref{eq:general_BC}. The functions $u(X; t)$ defined by~\eqref{eq:general_u_by_u^0} satisfy the master equation~\eqref{eq:master_eq_concrete} with $P(X; t) = u(X; t)$.
\end{Lemma}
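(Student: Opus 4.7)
My plan is to reduce the free evolution equation \eqref{eq:general_evolution_u^0} to the master equation \eqref{eq:master_eq_concrete} by applying the boundary condition \eqref{eq:general_BC} iteratively within each stack, in direct generalization of the two-particle computation of Section~\ref{sec:2_particle_case}. Writing $X$ as in \eqref{eq:general_X}, the particles indexed $K_{i-1}+1,\dots,K_i$ form the $i$-th stack at site $s_i$, and the $n$ gain terms in \eqref{eq:general_evolution_u^0} group into $m$ stack sums, where the $i$-th sum contributes $\sum_{r=1}^{k_i} b_{s_i-1} u^0(V_r; t)$ with $V_r := X^{K_{i-1}+r,-}$. Only $V_{k_i}=X^{K_i,-}$ lies in $\Weylchamber^n$; for each $r<k_i$ the configuration $V_r$ has an entry $s_i-1$ immediately followed by an entry $s_i$, which is exactly the pattern that \eqref{eq:general_BC} addresses.

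The key reduction is a one-step recursion obtained by applying \eqref{eq:general_BC} with $k=K_{i-1}+r$ and $x=s_i$:
\begin{gather*}
 b_{s_i-1}u^0(V_r;t) = q\, b_{s_i-1}u^0(V_{r+1};t) + (1-q) b_{s_i}\, u^0(X;t),\qquad r=1,\dots,k_i-1.
\end{gather*}
Solving this recursion yields
\begin{gather*}
 b_{s_i-1}u^0(V_r;t) = q^{k_i-r} b_{s_i-1}u^0\big(X^{K_i,-};t\big) + \big(1-q^{k_i-r}\big) b_{s_i}\, u^0(X;t),
\end{gather*}
and summing over $r=1,\dots,k_i$ with $\sum_{r=1}^{k_i}q^{k_i-r}=(1-q^{k_i})/(1-q)$ collapses the $i$-th stack's contribution to
\begin{gather*}
 \frac{1-q^{k_i}}{1-q}\, b_{s_i-1}\, u^0\big(X^{K_i,-};t\big) + \Big(k_i-\tfrac{1-q^{k_i}}{1-q}\Big) b_{s_i}\, u^0(X;t).
\end{gather*}
Substituting back into \eqref{eq:general_evolution_u^0} and using $b_k=(1-q)a_k$, the terms proportional to $u^0(X;t)$ combine with $-\sum_i k_i b_{s_i} u^0(X;t)$ to give exactly $-\sum_i (1-q^{k_i}) a_{s_i}\, u^0(X;t)$, matching the loss part of \eqref{eq:master_eq_concrete}.

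To finish I divide by $W(X)=\prod_i[k_i]_q!$ to pass to $u(X;t)$. The states $X$ and $X^{K_i,-}$ differ only in the stack sizes at $s_i$ (dropping from $k_i$ to $k_i-1$) and at $s_i-1$ (growing from $p_{s_i-1}(X)$ to $p_{s_i-1}(X^{K_i,-})=p_{s_i-1}(X)+1$), so $W(X^{K_i,-})/W(X)=[p_{s_i-1}(X^{K_i,-})]_q/[k_i]_q$, where $[k]_q=(1-q^k)/(1-q)$. Hence
\begin{gather*}
 \frac{1}{W(X)}\cdot\frac{1-q^{k_i}}{1-q}\, b_{s_i-1}\, u^0\big(X^{K_i,-};t\big) = \big(1-q^{p_{s_i-1}(X^{K_i,-})}\big)\, a_{s_i-1}\, u\big(X^{K_i,-};t\big),
\end{gather*}
which is precisely the gain term in \eqref{eq:master_eq_concrete}.

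The main obstacle is not an analytic difficulty but careful bookkeeping: one must correctly identify the non-Weyl-chamber configurations $V_r$ produced by the free equation, apply the boundary condition the right number of times to push the entry $s_i-1$ past the remaining $s_i$'s of the stack, and verify that the resulting $q$-geometric sum, combined with the $q$-factorial ratio $[p_{s_i-1}(X^{K_i,-})]_q/[k_i]_q$, reproduces exactly the coefficient $(1-q^{p_{s_i-1}(X^{K_i,-})})a_{s_i-1}$. Once this per-stack computation is done, assembling the $m$ stack contributions yields \eqref{eq:master_eq_concrete} with $P(X;t)=u(X;t)$.
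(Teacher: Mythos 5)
Your proposal is correct and follows essentially the same route as the paper: both reduce the per-stack gain terms $\sum_{r} b_{s_i-1}u^0(X^{K_{i-1}+r,-};t)$ via the recursion obtained from the boundary condition, arrive at the identity $b_{s_i-1}u^0(X^{j,-};t)=q^{K_i-j}b_{s_i-1}u^0(X^{K_i,-};t)+(1-q^{K_i-j})b_{s_i}u^0(X;t)$, and match coefficients through the ratio $W(X)/W(X^{K_i,-})$. The only cosmetic difference is that the paper verifies this ratio by case analysis on whether $s_i-1=s_{i+1}$, while you use the uniform formula $[p_{s_i-1}(X^{K_i,-})]_q/[k_i]_q$; the content is identical.
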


This lemma is essentially equivalent to the (A) $\Leftrightarrow$ (B) part of \cite[Proposition 2.7]{Borodin-Corwin-Sasamoto14}. For the sake of readability, we give the proof below. We also remark that given functions $u(X; t)$ for $X \in \Weylchamber^n$ such that they satisfy \eqref{eq:master_eq_concrete} with $P(X; t) = u(X; t)$, then we can construct $u^0(X; t)$ inductively for all $X \in \intZ^n$, such that~$u^0(X; t)$ satisfy \eqref{eq:general_evolution_u^0} for all~$X \in \intZ^n$, and are related to~$u(X; t)$ by~\eqref{eq:general_u_by_u^0} if $X \in \Weylchamber^n$. The $n = 2$ case is discussed in detail in Remark~\ref{rmk:inverse_2pt}, and we do not give the proof for the general~$n$ case since it is not related to our main result.

\begin{proof}[Proof of Lemma \ref{lem:BA_coefficients}]
 Functions $u(X; t)$ satisfy \eqref{eq:master_eq_concrete} if and only if for all $X$ expressed in~\eqref{eq:general_X},
 \begin{gather} \label{eq:master_eq_equiv}
 \frac{d}{dt} u^0(X; t) = \sum^m_{i = 1} D_i,
 \end{gather}
 where
 \begin{gather*}
 D_i = \frac{W(X)}{W(X^{K_i, -})} \big(1 - q^{p_{s_i - 1}(X^{K_i, -})}\big) a_{s_i - 1} u^0\big(X^{K_i, -}; t\big) - \big(1 - q^{k_i}\big) a_{s_i} u^0(X; t).
 \end{gather*}
 Below we show that for each $i$,
 \begin{gather} \label{eq:check_Bethe_by_i}
 D_i = \sum^{K_i}_{j = K_{i - 1} + 1} \big( b_{s_i - 1} u^0\big(X^{ j, -}; t\big) - b_{s_i} u^0(X; t) \big).
 \end{gather}
 Plugging \eqref{eq:check_Bethe_by_i} into \eqref{eq:master_eq_equiv}, we have that \eqref{eq:master_eq_equiv} is equivalent to \eqref{eq:general_evolution_u^0}, and then prove the lemma.

 In the case $i = m$ or $i < m$ and $s_{i} - 1 > s_{i + 1}$, we have $p_{s_i - 1}(X^{K_i, -}) = 1$ and $W(X)/W(X^{K_i, -})$ $= (1 - q^{k_i})/(1 - q)$. On the other hand, if $i < m$ and $s_i - 1 = s_{i + 1}$, we have $p_{s_i - 1}(X^{K_i, -}) = k_{i + 1} + 1$ and $W(X)/W(X^{K_i, -}) = (1 - q^{k_i})/(1 - q^{k_{i - 1} + 1})$. Then in both cases, we have
 \begin{gather}
 D_i = (1 - q^{k_i}) \big( a_{s_i - 1} u^0\big(X^{K_i, -}; t\big) - a_{s_i} u^0(X; t) \big) \nonumber\\
 \hphantom{D_i}{} = \frac{1 - q^{k_i}}{1 - q} \big( b_{s_i - 1} u^0\big(X^{K_i, -}; t\big) - b_{s_i} u^0(X; t) \big).\label{eq:simple_comp_i}
 \end{gather}
 If $k_i = 1$, then \eqref{eq:simple_comp_i} gives \eqref{eq:check_Bethe_by_i} directly. Otherwise, we use \eqref{eq:general_BC} recursively with $j = K_i - 1$, $K_i - 2, \dotsc, K_{i - 1} + 2, K_{i - 1} + 1$ to derive
 \begin{gather*}
 b_{s_i - 1} u^0\big(X^{j, -}; t\big) = q^{K_i - j} b_{s_i - 1} u^0\big(X^{K_i, -}; t\big) + \big(1 - q^{K_i - j}\big) b_{s_i} u^0(X; t),
 \end{gather*}
 and then verify \eqref{eq:check_Bethe_by_i}.
\end{proof}

Next, we use the idea of the Bethe ansatz to construct the solution to equations~\eqref{eq:general_evolution_u^0} and~\eqref{eq:general_BC} with initial condition
\begin{gather} \label{eq:init_cond_u^0_general}
 u^0(X; 0) =
 \begin{cases}
 W(Y) & \text{if $X = Y \in \Weylchamber^n$}, \\
 0 & \text{if $X \in \Weylchamber^n {\setminus} \{ Y \}$}.
 \end{cases}
\end{gather}
The existence of the solution is not obvious, and it depends on the completeness of the Bethe ansatz for this model, which is so far only proved in~\cite{Borodin-Corwin-Petrov-Sasamoto15} in the homogeneous rate parameter case. Nevertheless, we imitate the construction in~\cite{Korhonen-Lee14} and guess the formula. Then we simply check that the formula is correct.

\begin{Lemma} \label{lem:Bethe_ansatz_solution}
 Let $Y \in \Weylchamber^n$. The function
 \begin{gather*}
 u^0_Y(X; t) = \sum_{\sigma \in S_n} \Lambda_Y(X; t; \sigma)
 \end{gather*}
 satisfies equation~\eqref{eq:general_evolution_u^0}, boundary condition~\eqref{eq:general_BC} and initial condition~\eqref{eq:init_cond_u^0_general}.
\end{Lemma}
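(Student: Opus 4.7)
The plan is to verify three properties of $u^0_Y$: the free evolution equation~\eqref{eq:general_evolution_u^0}, the boundary condition~\eqref{eq:general_BC}, and the initial condition~\eqref{eq:init_cond_u^0_general}. The first I will check for each summand $\Lambda_Y(X;t;\sigma)$ individually, the second by a pairwise cancellation of $\sigma$ with $\sigma\circ(k,k+1)$, and the third by explicit residue analysis; the initial condition will be the main obstacle.

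For~\eqref{eq:general_evolution_u^0}, I apply $\frac{d}{dt}$ to $\Lambda_Y(X;t;\sigma)$, which simply inserts $-\sum_j w_j$ into the integrand, and compare with $\sum_k \bigl[b_{x_k-1}\Lambda_Y(X^{k,-};t;\sigma) - b_{x_k}\Lambda_Y(X;t;\sigma)\bigr]$. The shift $x_k\mapsto x_k-1$ strips the top factor $b_{x_k}/(b_{x_k}-w_{\sigma(k)})$ from the $k$-th $\prodprime$ and replaces the prefactor $-1/b_{x_k}$ by $-1/b_{x_k-1}$, so the $k$-th summand inserts $-w_{\sigma(k)}$ into the integrand; summing over $k$ gives $-\sum_k w_{\sigma(k)}=-\sum_j w_j$. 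For~\eqref{eq:general_BC}, I pair $\sigma$ with $\sigma' := \sigma\circ(k,k+1)$. Only the single inversion involving $\{\sigma(k),\sigma(k+1)\}$ is flipped under this right transposition, so $A_{\sigma'}=S_{(\beta,\alpha)}^{\pm 1} A_\sigma$ with $(\beta,\alpha)$ the ordered pair. Substituting the three relevant configurations into~\eqref{eq:general_BC} and factoring out every quantity that is symmetric under the swap at positions $k,k+1$ (notably $\prodprime^{x-1}_{l=y_{\sigma(k)}}\cdot\prodprime^{x-1}_{l=y_{\sigma(k+1)}}$, which is invariant), the net contribution of the pair collapses, in exactly the manner of the two-particle identity~\eqref{eq:integral_boundary_C}, to the vanishing of
\begin{gather*}
 A_\sigma\bigl(qw_{\sigma(k+1)}-w_{\sigma(k)}\bigr) + A_{\sigma'}\bigl(qw_{\sigma(k)}-w_{\sigma(k+1)}\bigr),
\end{gather*}
which is immediate from the definition $S_{(\beta,\alpha)}=-(qw_\beta-w_\alpha)/(qw_\alpha-w_\beta)$.

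For the initial condition~\eqref{eq:init_cond_u^0_general}, I set $t=0$ and show $u^0_Y(X;0)=W(Y)\,\delta_{X,Y}$ for $X\in\Weylchamber^n$. For $\sigma=\id$ the $n$ integrals decouple and, by the one-particle computation of Section~\ref{sec:1pt} specialized to $t=0$, yield $\prod_j \delta_{x_j,y_j}=\delta_{X,Y}$. For $\sigma\neq\id$ I argue by contour manipulation in the spirit of~\eqref{eq:int_1_init_2pt}--\eqref{eq:int_2_init_2pt}: whenever $x_j<y_{\sigma(j)}$, the corresponding $w_{\sigma(j)}$-factor is a polynomial in $w_{\sigma(j)}$, and since our prescribed contours keep the $A_\sigma$-poles $q^{-1}w_l$ outside $C_{\sigma(j)}$, the $w_{\sigma(j)}$-integral vanishes; whenever $x_j>y_{\sigma(j)}$, the integrand decays as $|w_{\sigma(j)}|^{-(x_j-y_{\sigma(j)}+1)}$ and the contour can be pushed to infinity without crossing extra poles, again giving zero. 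The permutations that survive are exactly those stabilizing the level sets of~$Y$, and iteratively computing their residues at $X=Y$ collapses, via a $q$-Vandermonde-type identity, to $\prod_i [n_i(Y)]_q! = W(Y)$.

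The hard part will be the coincident case $X=Y$ when~$Y$ has repeated entries, where many permutations contribute simultaneously and the interplay between the $A_\sigma$-poles $w_{\sigma(j)}=qw_i$ and the spatial $\prodprime$-factors must be tracked carefully. This is the natural generalization of the $n=2$, $y_1=y_2$ computation of Section~\ref{sec:2_particle_case}, where two integrals combined to produce $W((y,y))=1+q$; the general case requires an induction on the block sizes $n_i(Y)$ together with identifying exactly which residue contributions assemble into the $q$-factorial $[n_i(Y)]_q!$.
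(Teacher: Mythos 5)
Your treatment of the free evolution equation and of the boundary condition \eqref{eq:general_BC} is correct and is essentially the paper's argument: the same pairing $\sigma\leftrightarrow\sigma\circ(k,k+1)$ is used there, and your reduction of each pair's contribution to the vanishing of $A_\sigma\big(qw_{\sigma(k+1)}-w_{\sigma(k)}\big)+A_{\sigma'}\big(qw_{\sigma(k)}-w_{\sigma(k+1)}\big)$ is equivalent to the algebraic identity the paper displays after \eqref{eq:boundary_integral_x_x-1}.

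The gap is in the initial condition. Your claim that for $\sigma\neq\id$ the $w_{\sigma(j)}$-integral vanishes whenever $x_j\neq y_{\sigma(j)}$ is not justified, because it ignores half of the poles contributed by $A_\sigma$. For a generic index $j$ the variable $w_{\sigma(j)}$ enters $A_\sigma$ through factors $S_{(\sigma(j),i)}$, with poles at $w_{\sigma(j)}=qw_i$ lying \emph{inside} $C_{\sigma(j)}$, and through factors $S_{(l,\sigma(j))}$, with poles at $w_{\sigma(j)}=q^{-1}w_l$ lying outside $C_{\sigma(j)}$ but at finite distance. When $x_j<y_{\sigma(j)}$ the $\prodprime$-factor is indeed polynomial, but the residues at the interior poles $qw_i$ need not vanish, so the integral need not be zero; when $x_j>y_{\sigma(j)}$ the integrand does decay like $w_{\sigma(j)}^{-2}$ or faster, but pushing $C_{\sigma(j)}$ to infinity \emph{does} cross the poles $q^{-1}w_l$, so the decay alone does not kill the integral. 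The paper applies these two arguments only to the two extremal variables for which they are actually valid: $w_{\sigma(1)}$ (attached to $x_1$), since every pair $(\sigma(1),j)$ with $j<\sigma(1)$ is an inversion and no pair $(l,\sigma(1))$ is, so all $A_\sigma$-poles in that variable lie inside the contour and it can be blown up, giving $\Lambda_Y(X;0;\sigma)=0$ when $x_1>y_{\sigma(1)}$; and $w_1$ (attached to $x_{\sigma^{-1}(1)}$), for which all $A_\sigma$-poles lie outside $C_1$, giving $\Lambda_Y(X;0;\sigma)=0$ when $x_{\sigma^{-1}(1)}<y_1$. Combined with $x_1\geq x_{\sigma^{-1}(1)}$ and $y_1\geq y_{\sigma(1)}$ this forces only $x_1=\dotsb=x_{\sigma^{-1}(1)}=y_1=\dotsb=y_{\sigma(1)}$ for a nonvanishing term, which is far weaker than your ``$x_j=y_{\sigma(j)}$ for all $j$'' and does not by itself isolate the level-set-stabilizing permutations.

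The coincident case, which you defer to a ``$q$-Vandermonde-type identity'', is then resolved in the paper by induction on $n$ rather than by a direct residue computation: grouping $\sigma$ by $k=\sigma(1)$ as in \eqref{eq:defn_S_n(k)}, evaluating the $w_k$-integral explicitly as $-q^{k-1}b_{x_1}$ via the residue at infinity (equation \eqref{eq:inner_integral}), recognizing what remains as $q^{k-1}\Lambda_{(y_2,\dotsc,y_n)}((x_2,\dotsc,x_n);0;\varphi_k(\sigma))$, and summing $q^{k-1}$ over the admissible $k\leq k_1$ to produce the factor $[k_1]_q$ that upgrades $W((x_2,\dotsc,x_n))$ to $W(X)$. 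Your outline supplies neither this reduction nor a valid vanishing mechanism; the mechanism you do propose already breaks for $n\geq 3$.
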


\begin{proof}
\textit{Free particle evolution equations \eqref{eq:general_evolution_u^0}.}
 It is straightforward to check that for all $\sigma$, $\Lambda_Y(X; t; \sigma)$ satisfy \eqref{eq:general_evolution_u^0}, so $u^0_Y(X; t)$ also satisf\/ies \eqref{eq:general_evolution_u^0}.

\textit{Boundary conditions \eqref{eq:general_BC}.}
 To check that $u^0_Y(X; t)$ satisf\/ies \eqref{eq:general_BC}, we f\/ix $k \in \{ 1, \dotsc, n - 1 \}$, and denote
 \begin{gather*}
 X(a, b) = (x_1, \dotsc, x_{k - 1}, \!\!\underbrace{a, b}_{x_k \text{ and } x_{k + 1}}\!\! , x_{k + 2}, \dotsc, x_n).
 \end{gather*}
 We divide $S_n$ into $n!/2$ pairs, such that permutations $\mu$ and $\nu$ are in a pair if and only if $\mu = \nu \circ (k, k + 1)$ and $\nu = \mu \circ (k, k + 1)$, that is, $\mu(i) = \nu(i)$ if $i \neq k, k + 1$, and $\mu(k) = \nu(k + 1)$, $\mu(k + 1) = \nu(k)$. Below we assume that $\mu$ and $\nu$ are in a pair such that $\mu(k) = \nu(k + 1) = \alpha < \beta = \mu(k + 1) = \nu(k)$. It is not hard to check that
 \begin{gather*}
 A_{\nu} = A_{\mu} S_{(\beta, \alpha)}.
 \end{gather*}
 For notational simplicity, we denote
 \begin{gather*}
 F(w_1, \dotsc, w_n) =  \frac{1}{b_x} A_{\mu} \prod^n_{l = 1} e^{-w_l t} \prod_{\substack{j = 1, \dotsc, n \\ j \neq k, k + 1}} \frac{-1}{b_{x_j}} \prodprime^{x_j}_{l_j = y_{\mu(j)}} \frac{b_{l_j}}{b_{l_j} - w_{\mu(j)}}, \\
 G(w_1, \dotsc, w_n) =  F(w_1, \dotsc, w_n) \prod^x_{i = y_{\alpha}} \prodprime^x_{j = y_{\beta}} \frac{b_i b_j}{(b_i - w_{\alpha})(b_j - w_{\beta})}.
 \end{gather*}
 Then we have
 \begin{gather}
   b_{x - 1} \left( \Lambda_Y(X(x - 1, x); t; \mu) + \Lambda_Y(X(x - 1, x); t; \nu) \right)\nonumber \\
   \qquad{}
 =   \dashint_{C_1} dw_1 \dotsi \dashint_{C_n} dw_n F(w_1, \dotsc, w_n) \left( \prodprime^{x - 1}_{l_k = y_{\alpha}} \frac{b_{l_k}}{b_{l_k} - w_{\alpha}} \prodprime^x_{l_{k + 1} = y_{\beta}} \frac{b_{l_{k + 1}}}{b_{l_{k + 1}} - w_{\beta}} \right. \nonumber\\
\left. \qquad\quad{} - \frac{qw_{\beta} - w_{\alpha}}{qw_{\alpha} - w_{\beta}} \prodprime^{x - 1}_{l_k = y_{\beta}} \frac{b_{l_k}}{b_{l_k} - w_{\beta}} \prodprime^x_{l_{k + 1} = y_{\alpha}} \frac{b_{l_{k + 1}}}{b_{l_{k + 1}} - w_{\alpha}} \right) \nonumber\\
 \qquad {}=   \dashint_{C_1} dw_1 \dotsi \dashint_{C_n} dw_n G(w_1, \dotsc, w_n) \left( \frac{b_x - w_{\alpha}}{b_x} - \frac{qw_{\beta} - w_{\alpha}}{qw_{\alpha} - w_{\beta}} \frac{b_x - w_{\beta}}{b_x} \right).
\label{eq:boundary_integral_x-1_x}
 \end{gather}
 Similarly,
 \begin{gather}
 b_x \left( \Lambda_Y(X(x, x); t; \mu) + \Lambda_Y(X(x, x); t; \nu) \right) \nonumber\\
 \qquad{}{} = \dashint_{C_1} dw_1 \dotsi \dashint_{C_n} dw_n G(w_1, \dotsc, w_n) \left( 1 - \frac{qw_{\beta} - w_{\alpha}}{qw_{\alpha} - w_{\beta}} \right), \label{eq:boundary_integral_x_x}
 \\
 b_{x - 1} \left( \Lambda_Y(X(x, x - 1); t; \mu) + \Lambda_Y(X(x, x - 1); t; \nu) \right) \nonumber\\
 \qquad{} = \dashint_{C_1} dw_1 \dotsi \dashint_{C_n} dw_n G(w_1, \dotsc, w_n)
  \left( \frac{b_x - w_{\beta}}{b_x} - \frac{qw_{\beta} - w_{\alpha}}{qw_{\alpha} - w_{\beta}} \frac{b_x - w_{\alpha}}{b_x} \right).\label{eq:boundary_integral_x_x-1}
 \end{gather}
 With the integral expressions \eqref{eq:boundary_integral_x-1_x}, \eqref{eq:boundary_integral_x_x} and \eqref{eq:boundary_integral_x_x-1}, and the algebraic identity
 \begin{gather*}
 q \left( \frac{b_x - w_{\beta}}{b_x} - \frac{qw_{\beta} - w_{\alpha}}{qw_{\alpha} - w_{\beta}} \frac{b_x - w_{\alpha}}{b_x} \right) + (1 - q) \left( 1 - \frac{qw_{\beta} - w_{\alpha}}{qw_{\alpha} - w_{\beta}} \right) \\
 \qquad{} = \frac{b_x - w_{\alpha}}{b_x} - \frac{qw_{\beta} - w_{\alpha}}{qw_{\alpha} - w_{\beta}} \frac{b_x - w_{\beta}}{b_x},
 \end{gather*}
 it is clear that
 \begin{gather*}
 b_{x - 1} \sum_{\sigma \in \{ \mu, \nu \}} \Lambda_Y(X(x - 1, x); t; \sigma)   \\
 \qquad{} = q b_{x - 1} \sum_{\sigma \in \{ \mu, \nu \}} \Lambda_Y(X(x, x - 1); t; \sigma) + (1 - q) b_x \sum_{\sigma \in \{ \mu, \nu \}} \Lambda_Y(X(x, x); t; \sigma).
 \end{gather*}
 Since the identity holds for all the $n!/2$ pairs in $S_n$, we sum up them together, and conclude that $u^0_Y(X; t)$ satisf\/ies \eqref{eq:general_BC}.

\textit{Initial condition~\eqref{eq:init_cond_u^0_general}.}
 We check the initial condition inductively. The $n = 1$ case can be done by direct computation, and if we assume that~\eqref{eq:init_cond_u^0_general} holds when the particle number is $n - 1$, we show that it also holds when the particle number is~$n$.

 For any $k = 1, \dotsc, n$, we def\/ine two subsets of $S_n$:
 \begin{gather}
 S_n(k) = \{ \sigma \in S_n \,|\, \sigma(1) = k \}, \qquad \text{and} \nonumber\\
  S^{-1}_n(k) = \{ \sigma \in S_n \,|\, \sigma(k) = 1 \} = \big\{ \sigma \,|\, \sigma^{-1} \in S_n(k) \big\}.\label{eq:defn_S_n(k)}
 \end{gather}
 Then we def\/ine two one-to-one correspondences $\varphi_k\colon S_n(k) \to S_{n - 1}$ and $\psi_k\colon S^{-1}_n(k) \to S_{n - 1}$, such that
for all $i = 1, \dotsc, n - 1$
 \begin{gather}
 \varphi_k(\sigma) =   \tau \quad \text{if\/f} \quad \sigma(i + 1) =
 \begin{cases}
 \tau(i) & \text{if $\tau(i) < k$}, \\
 \tau(i) + 1 & \text{if $\tau(i) \geq k$},
 \end{cases} \nonumber\\
 \psi_k(\sigma) =   \lambda \quad \text{if\/f} \quad \sigma^{-1}(i + 1) =
 \begin{cases}
 \lambda^{-1}(i) & \text{if $\lambda^{-1}(i) < k$}, \\
 \lambda^{-1}(i) + 1 & \text{if $\lambda^{-1}(i) \geq k$}.
 \end{cases}
\label{eq:defn_varphi_k_and_psi_k}
 \end{gather}
 As an example, $\sigma = (123) = \left(
 \begin{smallmatrix}
 1 & 2 & 3 \\
 2 & 3 & 1
 \end{smallmatrix}
 \right) \in S_3$ is in $S_3(2)$ and $S_3(3)$, with $\varphi_2(\sigma) = (12) \in S_2$ and $\psi_3(\sigma) = \id \in S_2$.

 We have for any $\sigma \in S_n$,
\begin{gather}
 A_{\sigma}(w_1, \dotsc, w_n) \nonumber\\
\qquad{} =
 \begin{cases}
 \displaystyle \prod^{k - 1}_{j = 1} \frac{q w_k - w_j}{w_k - q w_j} A_{\tau}(w_1, \dotsc, w_{ k - 1}, w_{k + 1}, w_n) & \text{if $\sigma(1) = k$ and $\varphi_k(\sigma) = \tau$}, \\
 \displaystyle \prod^{k - 1}_{j = 1} \frac{q w_{\sigma(j)} - w_1}{w_{\sigma(j)} - q w_1} A_{\lambda}(w_2, \dotsc, w_n) & \text{if $\sigma(k) = 1$ and $\psi_k(\sigma) = \lambda$}.
 \end{cases}\label{eq:A_sigma_in_A_tau_A_lambda}
 \end{gather}

 We write
 \begin{gather*}
 u^0_Y(X; 0) = \sum^n_{k = 1} I^{(k)}_Y(X), \qquad \text{where} \quad I^{(k)}_Y(X) = \sum_{\sigma \in S_n(k)} \Lambda_Y(X; 0; \sigma),
 \end{gather*}
 and compute $I^{(k)}_Y(X)$ for $k = 1, \dotsc, n$, under the assumption that~\eqref{eq:init_cond_u^0_general} holds when $n$ is replaced by $n - 1$.

 In the simplest case $k = 1$, it is straightforward. Note that if $\sigma \in S_n(1)$, by~\eqref{eq:A_sigma_in_A_tau_A_lambda} $A_{\sigma}(w_1, \dotsc, w_n)$ does not depend on $w_1$, and we have ($(x_1)$ is represented by~$x_1$ as in Section~\ref{sec:1pt})
 \begin{gather*}
  \left( (-1)^n \prod^n_{i = 1} b_{x_i} \right) I^{(1)}_Y(X) \\
 =   \dashint_{C_1} dw_1 \prodprime^{x_1}_{l_1 = y_1} \frac{b_{l_1}}{b_{l_1} - w_1} \sum_{\sigma \in S_n(1)} \dashint_{C_2} dw_2 \dotsi \dashint_{C_n} dw_n A_{\sigma}(w_1, \dotsc, w_n) \prod^n_{j = 2} \prodprime^{x_j}_{l_j = y_{\sigma(j)}} \frac{b_{l_j}}{b_{l_j} - w_{\sigma(j)}} \\
 =   \dashint_{C_1} dw_1 \prodprime^{x_1}_{l_1 = y_1} \frac{b_{l_1}}{b_{l_1} - w_1} \sum_{\tau \in S_{n - 1}} \dashint_{C_2} dw_2 \dotsi \dashint_{C_n} dw_n A_{\tau}(w_2, \dotsc, w_n) \prod^{n - 1}_{j = 1} \prodprime^{x_{j + 1}}_{l_j = y_{\tau(j) + 1}} \frac{b_{l_j}}{b_{l_j} - w_{\tau(j) + 1}} \\
 =  \left( (-1)^n \prod^n_{i = 1} b_{x_i} \right) u^0_{y_1}(x_1; 0) u^0_{(y_2, \dotsc, y_n)}((x_2, \dotsc, x_n); 0).
 \end{gather*}
 So by the inductive assumption,
 \begin{gather*}
 I^{(1)}_Y(X) =
 \begin{cases}
 W((x_2, \dotsc, x_n)) & \text{if $X = Y$}, \\
 0 & \text{if $X \in \Weylchamber^n {\setminus} \{ Y \}$}.
 \end{cases}
 \end{gather*}

 In the general case $k > 1$, for all $\sigma \in S_n(k)$, if $S_{n - 1} \ni \tau = \varphi_k(\sigma)$, we have
 \begin{gather}
 \left( (-1)^n \prod^n_{i = 1} b_{x_i} \right) \Lambda_Y(X; 0; \sigma) \nonumber\\
\qquad{} = \dashint_{C_1} dw_1 \dotsi \dashint_{C_{k - 1}} dw_{k - 1} \dashint_{C_{k + 1}} dw_{k + 1} \dotsi \dashint_{C_n} dw_n A_{\tau}(w_1, \dotsc, w_{k - 1}, w_{k + 1}, \dotsc, w_n) \nonumber\\
 \qquad\quad{} \times \prod^n_{j = 2} \prodprime^{x_j}_{l_j = y_{\sigma(j)}} \frac{b_{l_j}}{b_{l_j} - w_{\sigma(j)}} \left( \dashint_{C_k} \prod^{k - 1}_{i = 1} \frac{q w_k - w_j}{q w_j - w_k} \prodprime^{x_1}_{l_1 = y_k} \frac{b_{l_1}}{b_{l_1} - w_k} dw_k \right).\label{eq:Lambda_YX;0;sigma_first}
 \end{gather}
 The inner integral vanishes if $x_1 > y_k$, so $\Lambda_Y(X; 0; \sigma) = 0$ if $x_1 > y_k$. To see it, we deform the contour $C_k$ into a circle centred at $0$ with radius $R_k \to \infty$ while keeping the other $C_j$ ($j \neq k$) in place, and use the estimate that the integrand of the inner integral is $\bigO\big(w^{y_k - x_1 - 1}_k\big)$ as $w_k \to \infty$. Note that this deformation does not change the integral on the right-hand side of~\eqref{eq:Lambda_YX;0;sigma_first}, since the only requirement for $C_k$ is that it is big enough to enclose all possible poles~$b_{l_1}$.

 Next, we assume that $\sigma \in S^{-1}_n(m)$, and $\psi_m(\sigma) = \lambda$. Then we have
 \begin{gather}
 \left( (-1)^n \prod^n_{i = 1} b_{x_i} \right) \Lambda_Y(X; 0; \sigma) = \dashint_{C_2} dw_2 \dotsi \dashint_{C_n} dw_n A_{\lambda}(w_2, \dotsc, w_n) \prod^n_{j = 2} \prodprime^{x_{\sigma^{-1}(j)}}_{l_j = y_j} \frac{b_{l_j}}{b_{l_j} - w_j}\nonumber \\
 \hphantom{\left( (-1)^n \prod^n_{i = 1} b_{x_i} \right) \Lambda_Y(X; 0; \sigma) =}{}
 \times \left( \dashint_{C_1} \prod^{k - 1}_{j = 1} \frac{q w_{\sigma(j)} - w_1}{w_{\sigma(j)} - q w_1} \prodprime^{x_m}_{l_1 = y_1} \frac{b_{l_1}}{b_{l_1} - w_1} dw_1 \right).\label{eq:Lambda_YX;0;sigma_second}
 \end{gather}
 If $x_m < y_1$, then the inner integral in~\eqref{eq:Lambda_YX;0;sigma_second} vanishes, for its integrand has no pole within~$C_1$. So~$\Lambda_Y(X; 0; \sigma) = 0$ if~$x_m < y_1$.

 Since $x_1 \geq \dotsb \geq x_m$ and $y_1 \geq \dotsb \geq y_k$, by argument above, we see that $\Lambda_Y(X; 0; \sigma) = 0$ unless $x_1 = \dotsb = x_m = y_1 = \dotsb = y_k$, and in particular, $x_1 = y_1 = \dotsb = y_k$.

 When $x_1 = y_1 = \dotsb = y_k$, by deforming the contour $C_k$ into a big circle centred at $0$ with radius $R_k \to \infty$, the inner integral in \eqref{eq:Lambda_YX;0;sigma_first} can be explicitly computed as
 \begin{gather} \label{eq:inner_integral}
 \dashint_{C_k} \prod^{k - 1}_{i = 1} \frac{q w_k - w_j}{w_k - q w_j} \frac{b_{x_1}}{b_{x_1} - w_k} dw_k = -q^{k - 1} b_{x_1},
 \end{gather}
 since its integrand is $-q^{k - 1} b_{x_1} w^{-1}_k \big(1 + \bigO\big(w^{-1}_k\big)\big)$. Plugging in~\eqref{eq:inner_integral} into~\eqref{eq:Lambda_YX;0;sigma_first}, we have
 \begin{gather}
   \left( (-1)^n \prod^n_{i = 1} b_{x_i} \right) \Lambda_Y(X; 0; \sigma)
 =  \dashint_{C_1} dw_1 \dotsi \dashint_{C_{k - 1}} dw_{k - 1} \dashint_{C_{k + 1}} dw_{k + 1} \dotsi \nonumber\\
 \qquad\quad{}\times \dashint_{C_n} dw_n A_{\tau}(w_1, \dotsc, w_{k - 1}, w_{k + 1}, \dotsc, w_n)  \prod^n_{j = 2} \prodprime^{x_j}_{l_j = y_{\sigma(j)}} \frac{b_{l_j}}{b_{l_j} - w_{\sigma(j)}} q^{k - 1} b_{x_1} \nonumber\\
\qquad{}
 =  q^{k - 1} \left( (-1)^n \prod^n_{i = 1} b_{x_i} \right) \Lambda_{(y_2, \dotsc, y_n)}((x_2, \dotsc, x_n); 0; \tau),\label{eq:evanluation_x_1=y_k}
  \end{gather}
 where $\tau = \varphi_k(\sigma)$. Thus summing up with $\sigma \in S_n(k)$, we have that $\tau$ runs over $S_{n - 1}$ and
 \begin{gather*} 
 I^{(k)}_Y(X) = q^{k - 1} u^0_{(y_2, \dotsc, y_n)}((x_2, \dotsc, x_n); 0).
 \end{gather*}
 If $X \neq Y$, then the assumption $x_1 = y_1 = \dotsb = y_k$ implies $(x_2, \dotsc, x_n) \neq (y_2, \dotsc, y_n)$, and by inductive assumption have that $I^{(k)}_Y(X) = q^{k - 1} \cdot 0 = 0$. Otherwise, $X = Y$, and then the assumption $x_1 = y_1 = \dotsb = y_k$ holds only if $k \leq k_1$, if $X$ is expressed in the form of~\eqref{eq:general_X}.

 The result derived above can be summarized as follows: If $X$ is expressed by~\eqref{eq:general_X}, then by the inductive assumption,
 \begin{gather*}
 I^{(k)}_X(X) =
 \begin{cases}
 0 & \text{if $X \in \Weylchamber^n {\setminus} \{ Y \}$}, \\
 0 & \text{if $X = Y$ and $k > k_1$}, \\
 q^{k - 1} W((x_2, \dotsc, x_n)) & \text{if $X = Y$ and $k \leq k_1$},
 \end{cases}
 \end{gather*}
 and hence $u^0_Y(X) = 0$ if $X \in \Weylchamber^n {\setminus} \{ Y \}$ and $u^0_X(X) = \big(1 + q + \dotsb + q^{k_1 - 1}\big) W((x_2, \dotsc, x_n)) = W(X)$, and we prove~\eqref{eq:init_cond_u^0_general}.
\end{proof}

Combining Lemmas \ref{lem:Bethe_ansatz_solution} and \ref{lem:BA_coefficients}, we prove Theorem \ref{thm:main} in the general $n$ particle case.

\section{Proof of Corollary \ref{cor:step_init}} \label{sec:proof_of_cor}

By Theorem \ref{thm:main}, we have
\begin{gather*}
  P_{(0, \dotsc, 0)}(X; t)
 =   \frac{1}{W(X)} \left( \prod^n_{k = 1} \frac{-1}{b_{x_k}} \right) \sum_{\sigma \in S_n} \dashint_{C_1} dw_1 \dotsi \\
 \hphantom{P_{(0, \dotsc, 0)}(X; t) =}{} \times \dashint_{C_n} dw_n A_{\sigma}(w_1, \dotsc, w_n) \prod^n_{j = 1} \left[ \prodprime^{x_j}_{k = 0} \left( \frac{b_k}{b_k - w_{\sigma(j)}} \right) e^{-w_{\sigma(j)} t} \right] \\
 \hphantom{P_{(0, \dotsc, 0)}(X; t)}{}
 =   \frac{1}{W(X)} \left( \prod^n_{k = 1} \frac{-1}{b_{x_k}} \right) \sum_{\sigma \in S_n} \dashint_{C_1} dw_1 \dotsi
\\
\hphantom{P_{(0, \dotsc, 0)}(X; t) =}{} \times
\dashint_{C_n} dw_n A_{\sigma}(w_{\sigma^{-1}(1)}, \dotsc, w_{\sigma^{-1}(n)}) \prod^n_{j = 1} \left[ \prodprime^{x_j}_{k = 0} \left( \frac{b_k}{b_k - w_j} \right) e^{-w_j t} \right],
\end{gather*}
where we utilize that the contours $C_j$ can be taken as identical. Hence it is clear that if we can prove
\begin{gather} \label{eq:id_to_prove_cor}
 \sum_{\sigma \in S_n} A_{\sigma}(w_{\sigma^{-1}(1)}, \dotsc, w_{\sigma^{-1}(n)}) = [n]_q! B(w_1, \dotsc, w_n),
\end{gather}
then \eqref{eq:step_init_trans_prob} is a direct consequence of \eqref{eq:main_result}. We prove \eqref{eq:id_to_prove_cor} by induction. For $n = 1, 2$, \eqref{eq:id_to_prove_cor} holds obviously. Suppose \eqref{eq:id_to_prove_cor} holds with $n$ replaced by $n - 1$. Recall the notations $S^{-1}_n(k)$ def\/ined in \eqref{eq:defn_S_n(k)} and $\psi_k$ def\/ined in \eqref{eq:defn_varphi_k_and_psi_k} for $k = 1, \dotsc, n$, and let $v^{(k)}_i = w_i$ for $i = 1, \dotsc, k - 1$ and $v^{(k)}_i = w_{i + 1}$ for $i = k, \dotsc, n - 1$. We have
\begin{align*}
 \sum_{\sigma \in S^{-1}_n(k)} A_{\sigma}(w_{\sigma^{-1}(1)}, \dotsc, w_{\sigma^{-1}(n)}) & =   \prod^{k - 1}_{j = 1} \frac{q w_j - w_k}{w_j - q w_k} \sum_{\sigma \in S^{-1}_n(k)} A_{\psi_k(\sigma)}(w_{\sigma^{-1}(2)}, \dotsc, w_{\sigma^{-1}(n)}) \nonumber\\
 &=   \prod^{k - 1}_{j = 1} \frac{q w_j - w_k}{w_j - q w_k} \sum_{\lambda \in S_{n - 1}} A_{\lambda}\big(v^{(k)}_{\lambda^{-1}(1)}, \dotsc, v^{(k)}_{\lambda^{-1}(n - 1)}\big) \nonumber\\
&  =   [n - 1]_q! \prod^{k - 1}_{j = 1} \frac{q w_j - w_k}{w_j - q w_k} B\big(v^{(k)}_1, \dotsc, v^{(k)}_{n - 1}\big),
\end{align*}
and then
\begin{gather*}
  \sum_{\sigma \in S_n} A_{\sigma}(w_{\sigma^{-1}(1)}, \dotsc, w_{\sigma^{-1}(n)})
 = \sum^n_{k = 1} \left( \sum_{\sigma \in S^{-1}_n(k)} A_{\sigma}(w_{\sigma^{-1}(1)}, \dotsc, w_{\sigma^{-1}(n)}) \right) \\
\qquad{} =    [n - 1]_q! \sum^n_{k = 1} \prod^{k - 1}_{j = 1} \frac{q w_j - w_k}{w_j - q w_k} B\big(v^{(k)}_1, \dotsc, v^{(k)}_{n - 1}\big) \\
 \qquad{} =   [n - 1]_q! \sum^n_{k = 1} \prod^{k - 1}_{j = 1} \frac{q w_j - w_k}{w_j - q w_k} \left( \prod^{k - 1}_{j = 1} \frac{q w_k - w_j}{w_k - w_j} \prod^n_{l = k + 1} \frac{q w_l - w_k}{w_l - w_k} B(w_1, \dotsc, w_n) \right) \\
 \qquad{} =   [n - 1]_q! C(w_1, \dotsc, w_n) B(w_1, \dotsc, w_n),
\end{gather*}
where
\begin{gather*} 
 C(w_1, \dotsc, w_n) = \sum^n_{k = 1} \prod_{j = 1, \dotsc, n, \ j \neq k} \frac{q w_j - w_k}{w_j - w_k}.
\end{gather*}
To prove \eqref{eq:id_to_prove_cor}, it suf\/f\/ices to show
\begin{gather} \label{eq:tech_id_for_cor}
 C(w_1, \dotsc, w_n) = \frac{[n]_q!}{[n - 1]_q!} = \frac{1 - q^n}{1 - q}.
\end{gather}
Denoting the function $h(z) = z^{-1} \prod\limits^n_{k = 1} (z - q w_k)/(z - w_k)$ that is meromorphic in $z$, we have
\begin{gather*}
 (1 - q) C(w_1, \dotsc, w_n) = \sum^n_{k = 1} \Res_{z = w_k} h(z) = -\Res_{z = \infty} h(z) - \Res_{z = 0} h(z) = 1 - q^n.
\end{gather*}
Thus we verify \eqref{eq:tech_id_for_cor} and f\/inish the proof of the corollary. (Inspired by an anonymous referee's suggestion, the authors f\/ind that \eqref{eq:tech_id_for_cor} can be proved algebraically by taking $z = 0$ in \cite[the f\/irst formula on p.~210]{Macdonald95}.)

\subsection*{Acknowledgements}

The authors are indebted to Ivan Corwin for numerous comments, especially on the relation to the spectral theory and on the proof of Corollary \ref{cor:step_init}. We also thank Eunghyun Lee for valuable suggestions and comments. At last, we thank the anonymous referees for their comments, suggestions and corrections, especially on a sign error in the manuscript. The f\/irst named author is supported partially by the startup grant R-146-000-164-133.

\pdfbookmark[1]{References}{ref}
\LastPageEnding

\end{document}